\documentclass[a4paper]{article}
\usepackage{lmodern}
\pdfoutput=1
\usepackage{graphicx,wrapfig}
\usepackage{lipsum}
\usepackage{amsmath,amssymb,mathrsfs}
\usepackage{hyperref}
\usepackage{url}
\usepackage{mathtools}
\usepackage{changepage}
\usepackage[ruled,vlined]{algorithm2e}
\usepackage{amsmath}
\usepackage{multirow}
\usepackage{wrapfig}
\usepackage{subfig}
\usepackage{color}
\usepackage{epsfig,enumerate,amsmath,amsfonts,amssymb,amsthm,mathrsfs,ifpdf}
\usepackage{indentfirst,relsize}
\usepackage{setspace,graphicx}
\usepackage{latexsym}
\usepackage[all]{xy}
\usepackage[dvipsnames]{pstricks}
\usepackage{pst-grad} 
\usepackage{pst-plot} 
\usepackage[margin = 2.50cm]{geometry}

\usepackage{authblk} 
\usepackage{algpseudocode}
\usepackage{color,soul}
\usepackage{enumitem}

\usepackage{orcidlink}

\newcommand{\remove}[1]{}

\newtheorem{theorem}{Theorem}
\newtheorem{lemma}[theorem]{Lemma}

\newtheorem{definition}[theorem]{Definition}

\newtheorem{observation}[theorem]{Observation}

\newcommand{\DP}{\textsc{DP}}
\newcommand{\eop}{\textsc{Maximum Edge Open Packing}}
\newcommand{\maxeop}{\textsc{MaxEOP}}
\usepackage{todonotes}
\usepackage[most]{tcolorbox}

\title{Algorithms for the \eop~Problem}

\author[1]{Sriram Bhyravarapu\footnote{sriramb@iitg.ac.in}}
\author[2]{Gautam K. Das\footnote{gkd@iitg.ac.in}}
\author[3]{Kamal Santra \orcidlink{0009-0006-5997-1452} \footnote{kamal.7.2013@gmail.com, kamal.santra@iitg.ac.in}}
\affil[1]{Department of Computer Science and Engineering\\
	
	Indian Institute of Technology Guwahati \\
	
	Guwahati, 781039, Assam, India}

\affil[2, 3]{Department of Mathematics\\
	
	Indian Institute of Technology Guwahati\\
	
	Guwahati, 781039, Assam, India}
\date{}

\begin{document}

	\maketitle
	\begin{abstract}
		Packing problems form a central theme in graph theory, owing to their relevance in 
		modeling conflict-free resource allocation, network design, and communication 
		constraints. Motivated by applications in wireless networks where each device can 
		participate in at most one communication at a time and simultaneous links must 
		avoid interference we consider a generalization of induced matching known as 
		\emph{edge open packing}. Two edges of a graph are said to conflict if a third 
		edge connects one endpoint of each; an \emph{edge open packing set} is a set of 
		edges containing no such conflicting pair. The largest cardinality of such a set 
		is the \emph{edge open packing number} of a graph.
		
		In this work, we study the computational complexity of the Maximum Edge Open 
		Packing Problem. We give a polynomial-time algorithm for the problem in 
		\emph{distance-hereditary graphs}, exploiting their canonical decomposition via 
		twin-set interactions. We further show that the problem remains polynomial-time 
		solvable on \emph{biconvex bipartite graphs}, thereby identifying a tractable 
		subclass within bipartite graphs, in contrast to the known NP-hardness of the 
		problem on Eulerian bipartite graphs. Finally, we initiate the parameterized 
		complexity study of the problem and present a fixed-parameter tractable algorithm 
		for \emph{chordal graphs}, parameterized by the clique number $\omega$, running 
		in $O(2^{\omega}\cdot\mathrm{poly}(n))$ time.
	\end{abstract}

	{\bf Keywords.}
	Edge open packing, distance-hereditary, biconvex, fixed-parameter tractable, chordal

	\section{Introduction}
Many fundamental optimization problems in graph theory arise from the need to 
identify collections of edges or vertices that behave in an independent or 
non-interfering manner. Classical examples include matchings, induced matchings, 
packings, and colorings, all of which capture different forms of combinatorial 
separation within a graph. Such structures play an important role in applications 
ranging from scheduling and resource allocation to communication networks and 
distributed computing, where conflicts between chosen elements must be avoided \cite{cameron1989induced, cardoso2019injective, dabrowski2013new, lozin2002maximum, stockmeyer1982np}


One well-studied notion in this context is the \emph{induced matching}, a set of 
pairwise non-adjacent edges whose endpoints are also mutually non-adjacent. 
Induced matchings provide a strict form of interference avoidance and have been 
investigated extensively due to their algorithmic complexity and diverse
applications. However, in many settings such as communication networks edges may
be required to satisfy a weaker form of separation: rather than forbidding 
adjacency between their endpoints, it may be sufficient to ensure that no 
\emph{third} edge connects two chosen edges. This relaxation motivates the study 
of \emph{edge open packings}, introduced by Chelladurai et al.~\cite{chelladurai2022edge}.

Given a graph $G=(V,E)$, a set $D \subseteq E$ is an \emph{edge open packing} 
(EOP) if no two edges in $D$ are linked by an intermediate edge of $G$. 
Equivalently, for any distinct $e_1,e_2 \in D$, the edge-induced subgraph 
$G\langle e_1,e_2\rangle$ extends no further to form a $P_4$ or a triangle when 
combined with any other edge of $G$. The maximum size of such a set, denoted 
$\rho^o_e(G)$, is the \emph{edge open packing number}. In contrast to induced 
matchings, whose induced subgraph consists of isolated $K_{1,1}$ components, an 
EOP set may induce larger stars, enabling a richer structural behaviour. This 
makes the Maximum Edge Open Packing Problem (MaxEOP) a natural and flexible 
generalization of induced matching.

Recent work has revealed that determining $\rho^o_e(G)$ is computationally 
challenging in general. Bre\v{s}ar and Samadi~\cite{brevsar2024edge} proved that 
MaxEOP is NP-complete even on several restricted graph classes, including planar 
graphs of maximum degree~$4$ and Eulerian bipartite graphs, while also giving a 
linear-time algorithm for trees. Subsequent research deepened the structural 
understanding of EOP sets and explored their relationships with induced matchings 
and graph parameters such as diameter, minimum degree, and clique number 
\cite{BRESAR2025, chelladurai2022edge, pandey2025edge,eopblock}.

Despite this progress, the algorithmic landscape for MaxEOP remains only partially 
understood. In particular, the complexity of the problem for a broad range of 
graph classes with strong structural decompositions has not yet been completely 
resolved. This motivates our investigation of MaxEOP in graph classes where 
recursive constructions and decomposition trees can be exploited to design
efficient algorithms.

\medskip
\noindent
\textbf{Our contributions.}
In this work, we make progress toward this direction by presenting new algorithms 
for MaxEOP in three well-studied graph classes:
\begin{itemize}
	\item In Section \ref{EOP_Sec_2}, we give a polynomial-time algorithm for MaxEOP in 
	\emph{distance-hereditary graphs}, using the twin-set based decomposition 
	introduced by Chang et al.~\cite{chang1997dynamic}. Our dynamic programming 
	method systematically computes EOP values along the decomposition tree. This result generalizes the result on block graphs from \cite{eopblock}. 
	
	\item Because MaxEOP is NP-hard even on Eulerian bipartite graphs \cite{brevsar2024edge}, identifying 
	tractable subclasses becomes crucial. We show that 
	the problem admits a polynomial-time algorithm 
	on the more restricted class of biconvex bipartite graphs, in Section \ref{EOP_Sec_3}.

	\item  A question raised by Bre\v{s}ar and Samadi~\cite{brevsar2024edge} asks for the 
	computational complexity of MaxEOP on \emph{chordal graphs}. 
	We provide a partial affirmative answer by designing an 
	$O(2^{\omega}\cdot \mathrm{poly}(n))$ fixed-parameter tractable algorithm 
	parameterized by the clique number~$\omega$, in Section \ref{EOP_Sec_4}. As a consequence, the problem becomes 
	polynomial-time solvable for chordal graphs whose clique number is constant. 
\end{itemize}
Finally, we conclude with an open question in Section \ref{EOP_Sec_5}.

\section{Preliminaries}
In this section, we introduce the notation and graph classes used throughout the paper. For terminology not defined here, we refer the reader to the standard text of Diestel~\cite{diestel2005graph}. 

Let $G=(V,E)$ be a graph. We use $V(G)$ and $E(G)$ to denote the vertex set and edge set of $G$, respectively, and omit the argument whenever the graph is clear from the context. The \emph{order} of $G$ is the number of vertices in $G$, namely $|V(G)|$. For a vertex $v\in V(G)$, the \emph{open neighborhood} of $v$ is denoted by $N_G(v)$, while the \emph{closed neighborhood} is denoted by $N_G[v]=N_G(v)\cup\{v\}$. When no confusion arises, we simply write $N(v)$ and $N[v]$. The \emph{degree} of a vertex $v$ is $\deg(v)=|N(v)|$, and the maximum degree of $G$ is denoted by $\Delta(G)$, or simply $\Delta$.

For a subset $U\subseteq V(G)$, we denote by $\deg_U(v)$ the number of neighbors of $v$ contained in $U$, and by $N_U(v)$ the set of neighbors of $v$ in $U$. A vertex is called a \emph{pendant vertex} (or \emph{leaf}) if it has degree one, and an \emph{isolated vertex} if it has degree zero. Given a subset $S\subseteq V(G)$, the subgraph induced by $S$ is denoted by $G[S]$. Two adjacent vertices $u$ and $v$ are called \emph{true twins} if $N_G[u]=N_G[v]$, whereas two non-adjacent vertices are called \emph{false twins} if $N_G(u)=N_G(v)$. We denote by $P_n$ the path on $n$ vertices.

We now recall the graph classes that are relevant to our work. A graph $G$ is called \emph{distance-hereditary} if, for every pair of vertices, their distance remains unchanged in every connected induced subgraph containing them. Distance-hereditary graphs were introduced by Howorka~\cite{howorka1977characterization} and have been extensively studied due to their rich structural properties and algorithmic applications~\cite{d1988distance,hammer1990completely,hsieh2002characterization,yeh1995weighted}. Hammer and Maffray~\cite{hammer1990completely} showed that distance-hereditary graphs can be recognized in linear time.

A graph $G=(V,E)$ is said to be \emph{bipartite} if its vertex set can be partitioned into two disjoint independent sets $X$ and $Y$ such that every edge of $G$ has one endpoint in $X$ and the other in $Y$. In such a case, we write $G=(X\cup Y,E)$. A bipartite graph $G=(X\cup Y,E)$ is called a \emph{bipartite chain graph} if there exist orderings
$
\sigma_X=(x_1,x_2,\ldots,x_{n_1})
\quad\text{and}\quad
\sigma_Y=(y_1,y_2,\ldots,y_{n_2})
$
of the vertices of $X$ and $Y$, respectively, such that
$
N(x_{n_1})\subseteq N(x_{n_1-1})\subseteq \cdots \subseteq N(x_1)
$
and
$
N(y_{n_2})\subseteq N(y_{n_2-1})\subseteq \cdots \subseteq N(y_1).
$
Such orderings are called \emph{chain orderings}. It is known that chain orderings can be computed in linear time~\cite{heggernes2007linear}. 

A bipartite graph $G=(X\cup Y,E)$ is called \emph{biconvex bipartite} if the vertices of $X$ and $Y$ can be ordered in such a way that, for every vertex $y\in Y$, the neighbors of $y$ appear consecutively in the ordering of $X$, and similarly, for every vertex $x\in X$, the neighbors of $x$ appear consecutively in the ordering of $Y$. In this case, both parts satisfy the \emph{convexity property}. The following equivalent characterization of connected biconvex bipartite graphs will be useful later.

\begin{definition}[Biconvex Bipartite Graph \cite{BHYRAVARAPU2025115260, Diaz2021}]\label{def:biconv}
	A connected graph $G=(V,E)$ is biconvex bipartite
	if and only if 
	$V(G)$ can be partitioned into $p+1$ disjoint independent sets $L_0,L_1,\dots, L_p$ (in this order) in such a way that $|L_0|=1$, any two vertices in non-consecutive sets are non-adjacent, and 
	\begin{enumerate}
		\item Any two consecutive sets $L_{i-1}$ and $L_{i}$ induce a chain graph, denoted by $G_i$. 
		\item For each $i\in \{1, 2, \dots, p-1\}$, there are two orderings $\sigma_{i, 1}$ and $\sigma_{i,2}$ 
		of vertices of the set $L_i$ 
		such that 
		$\sigma_{i,1}$ is non-increasing in $G_i$ and 
		$\sigma_{i,2}$ is non-decreasing in $G_{i+1}$. 
		For the set $L_0$ (resp. $L_p$), there is a non-decreasing (resp. non-increasing) ordering of vertices of $L_0$ (resp. $L_p$) in $G_1$ (resp. $G_p$). 
		
	\end{enumerate}
\end{definition}

Finally, we briefly recall the notions of tree decomposition and nice tree
decomposition, which are useful in many dynamic programming algorithms on
graphs. A \emph{tree decomposition} of a graph \(G=(V,E)\) is a pair
\((T,\mathcal{B})\), where \(T\) is a tree and
\(\mathcal{B}=\{B_t\subseteq V(G):t\in V(T)\}\) is a collection of subsets of
\(V(G)\), called \emph{bags}, satisfying the following properties:
\begin{enumerate}
	\item \(\bigcup_{t\in V(T)}B_t=V(G)\);
	
	\item for every edge \(uv\in E(G)\), there exists a node \(t\in V(T)\) such
	that \(\{u,v\}\subseteq B_t\); and
	
	\item for every vertex \(v\in V(G)\), the set
	\(\{t\in V(T):v\in B_t\}\) induces a subtree of \(T\).
\end{enumerate}

The \emph{width} of a tree decomposition is
\(\max_{t\in V(T)}|B_t|-1\), and the \emph{treewidth} of \(G\) is the minimum
width over all tree decompositions of \(G\).

To illustrate the concept, consider the path graph \(P_4\) with vertex set
\(\{a,b,c,d\}\). A tree decomposition of width \(1\) is obtained by taking
three bags \(\{a,b\}\), \(\{b,c\}\), and \(\{c,d\}\), arranged in a path. The
corresponding decomposition is shown in Figure~\ref{fig:tree-dec-example}.

\begin{figure}[h]
	\centering
	
	\begin{tikzpicture}[every node/.style={circle, draw, minimum size=8mm}]
		\node (a) at (0,0) {$a$};
		\node (b) at (1.5,0) {$b$};
		\node (c) at (3,0) {$c$};
		\node (d) at (4.5,0) {$d$};
		
		\draw (a)--(b)--(c)--(d);
	\end{tikzpicture}
	
	\vspace{0.5cm}
	
	\begin{tikzpicture}[every node/.style={draw, rectangle, rounded corners, minimum width=16mm, minimum height=6mm}]
		\node (t1) at (0,0) {$\{a,b\}$};
		\node (t2) at (2.5,0) {$\{b,c\}$};
		\node (t3) at (5,0) {$\{c,d\}$};
		
		\draw (t1)--(t2)--(t3);
	\end{tikzpicture}
	
	\caption{A path graph together with a tree decomposition of width \(1\).}
	\label{fig:tree-dec-example}
\end{figure}

A \emph{nice tree decomposition} is a rooted tree decomposition in which every
node is one of the following four types:
\begin{enumerate}
	\item a \emph{leaf node} with an empty bag;
	
	\item an \emph{introduce node} that introduces exactly one vertex into the
	bag of its child;
	
	\item a \emph{forget node} that removes exactly one vertex from the bag of
	its child; or
	
	\item a \emph{join node} having two children with identical bags.
\end{enumerate}

\section{Distance-hereditary graphs}\label{EOP_Sec_2}

In this section, we develop a polynomial time dynamic programming based algorithm for computing a maximum edge open packing set of a distance-hereditary graph $G$. Towards this, we make use of a decomposition tree. 

Unlike the one-vertex-extension characterization, Chang et al.~\cite{chang1997dynamic} 
study distance-hereditary graphs through the interaction structure of edges between 
two designated vertex sets, called \emph{twin sets}. A graph $G$ consisting of a 
single vertex $v$ is regarded as a distance-hereditary graph with twin set $TS(G)=\{v\}$. Let $G_l$ and $G_r$ be two distance-hereditary graphs with twin sets 
$TS(G_l)$ and $TS(G_r)$, respectively. A new distance-hereditary graph $G$ can be 
constructed from $G_l$ and $G_r$ by applying one of the following three fundamental 
operations: the \emph{true twin} operation, the \emph{false twin} operation, or 
the \emph{attachment} operation. These operations form the basis for a canonical 
decomposition tree representation of distance-hereditary graphs; see 
Figure~\ref{fig:Decomposition_tree}. The three operations are defined as follows. 

\begin{figure}[ht]
	\centering
	\begin{tikzpicture}[
		vertex/.style={circle, draw, fill=black, inner sep=1.5pt},
		leaf/.style={circle, draw, minimum size=6mm},
		op/.style={circle, draw, minimum size=6mm},
		level 1/.style={sibling distance=45mm},
		level 2/.style={sibling distance=25mm},
		level 3/.style={sibling distance=15mm},
		level distance=10mm
		]
		
		
		\node[vertex,label=left:$v_1$] (v1) at (0,0) {};
		\node[vertex,label=above:$v_2$] (v2) at (-1,1) {};
		\node[vertex,label=above:$v_3$] (v3) at (1,1) {};
		\node[vertex,label=right:$v_4$] (v4) at (2.5,0.5) {};
		\node[vertex,label=below:$v_5$] (v5) at (0.5,-1) {};
		\node[vertex,label=right:$v_6$] (v6) at (2.5,-0.5) {};
		
		\draw (v1)--(v2);
		\draw (v1)--(v3);
		\draw (v2)--(v3);
		\draw (v1)--(v4);
		\draw (v1)--(v5);
		\draw (v4)--(v6);
		\draw (v5)--(v4);
		\draw (v1)--(v6);   
		
		
		\node[op] (root) at (7.5,1.5) {$\oplus$}
		child {
			node[op] {$\oplus$}
			child { node[leaf] {$v_1$} }
			child {
				node[op] {$\times$}
				child { node[leaf] {$v_2$} }
				child { node[leaf] {$v_3$} }
			}
		}
		child {
			node[op] {$\times$}
			child { node[leaf] {$v_4$} }
			child {
				node[op] {$\circ$}
				child { node[leaf] {$v_5$} }
				child { node[leaf] {$v_6$} }
			}
		};
		
	\end{tikzpicture}
	
	\caption{A distance-hereditary graph (left) and its decomposition tree (right).}
	\label{fig:Decomposition_tree}
\end{figure}

\begin{enumerate}[label=\textup{(\roman*)}]
	\item \emph{True twin operation}: \(G=G_l\otimes G_r\), where
	\(V(G)=V(G_l)\cup V(G_r)\),
	\(E(G)=E(G_l)\cup E(G_r)\cup\{xy:x\in TS(G_l),\,y\in TS(G_r)\}\), and
	\(TS(G)=TS(G_l)\cup TS(G_r)\).
	
	\item \emph{False twin operation}: \(G=G_l\odot G_r\), where
	\(V(G)=V(G_l)\cup V(G_r)\), \(E(G)=E(G_l)\cup E(G_r)\), and
	\(TS(G)=TS(G_l)\cup TS(G_r)\).
	
	\item \emph{Attachment operation}: \(G=G_l\oplus G_r\), where
	\(V(G)=V(G_l)\cup V(G_r)\),
	\(E(G)=E(G_l)\cup E(G_r)\cup\{xy:x\in TS(G_l),\,y\in TS(G_r)\}\), and
	\(TS(G)=TS(G_l)\).
\end{enumerate}

Assume that a distance-hereditary graph $G$ is obtained through a sequence of the operations defined above. This construction can be encoded by a full binary tree $T$, called the \emph{decomposition tree}. The leaves of $T$ correspond exactly to the vertices of $G$, while each internal node of $T$ is labeled by one of the symbols $\otimes$, $\odot$, or $\oplus$, indicating a true twin, false twin, or attachment operation, respectively. If \(v\) is a node of \(T\), then \(L(v)\) denotes
the set of leaves in the subtree rooted at \(v\), and \(G_v\) denotes the graph
induced by \(L(v)\).

In this representation, each leaf corresponds to the distance-hereditary graph $G[\{v_i\}]$ with twin set $TS(G[\{v_i\}]) = \{v_i\}$ for $1 \le i \le n$. Each rooted subtree encodes the distance-hereditary graph whose vertex set and construction steps are determined by the subtree, and each internal node specifies the operation applied to the two distance-hereditary graphs associated with its left and right subtrees. Figure~\ref{fig:Decomposition_tree} presents an illustrative example. In particular, the right side of Figure~\ref{fig:Decomposition_tree} displays the decomposition tree $T$ associated with the distance-hereditary graph $G$ on the left of Figure~\ref{fig:Decomposition_tree}.

\begin{lemma}[\cite{chang1997dynamic,hsieh2002characterization}]
  Given a distance-hereditary graph G, a decomposition tree T of G can be generated in $O(n+m)$ time.  
\end{lemma}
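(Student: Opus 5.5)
This lemma is cited from the literature, so the plan is to reconstruct the standard argument rather than build something new. The starting point is the one-vertex-extension characterization of distance-hereditary graphs. A connected graph is distance-hereditary if and only if it can be built from a single vertex by repeatedly adding a pendant vertex, a true twin, or a false twin of an existing vertex. Hammer and Maffray~\cite{hammer1990completely} compute such a pruning sequence in $O(n+m)$ time by processing vertices level by level in a BFS layering from an arbitrary root. Within each layer they detect twins by partition refinement over neighborhoods, whose total cost is linear in $\sum_v \deg(v)$, and detect pendants by counting degrees.

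First, I would run this recognition algorithm to get a pruning sequence $(v_n, v_{n-1},\dots,v_2)$, each $v_i$ annotated with its type and its partner $u_i$. Second, I would convert the sequence into a decomposition tree bottom-up in the reverse order of pruning. Each vertex starts as a one-leaf tree with twin set $\{v\}$. When $v_i$ is attached to $u_i$ as a true twin, the leaf $u_i$ is replaced by an $\otimes$-node with children $u_i$ and $v_i$. A false twin gives a $\odot$-node in the same way. A pendant gives an $\oplus$-node with left child $u_i$ and right child $v_i$. Third, I would check by induction that each of these local substitutions yields exactly the edge set defined by the operations. The point is that a leaf's twin set is itself, and the neighbors of the subtree now standing for $u_i$ are inherited by exactly the twin-set vertices that should receive them. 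This is Chang et al.'s observation that twin sets are precisely the vertices that copy the external adjacency of the original vertex. For disconnected $G$, I would combine the components with $\odot$-nodes, since the false twin operation takes a disjoint union.

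Each substitution is $O(1)$ using pointers from vertices to leaves, so the tree has $2n-1$ nodes and is built in $O(n)$ time beyond the recognition step. The main obstacle is the correctness check in the third step. Replacing $u_i$ by a twin or pendant pair has to be consistent with how twin sets propagate through later $\oplus$ operations. For instance, once a pendant $v_i$ is hung under an $\oplus$-node, it must never acquire further external neighbors, and it does not, because $TS$ of an $\oplus$-node keeps only the left side. I would handle this with an invariant: for every tree node $t$, every vertex outside $L(t)$ that has a neighbor in $L(t)$ is adjacent to exactly $TS(G_t)$. I would verify that this invariant is preserved by each of the three substitutions. For the linear-time recognition itself, I would defer to~\cite{hammer1990completely,hsieh2002characterization} rather than reprove it.
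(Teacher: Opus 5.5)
Your proposal is correct. The paper gives no proof of this lemma; it simply imports it from \cite{chang1997dynamic,hsieh2002characterization}. What you supply is therefore a genuine reconstruction: a linear-time pruning sequence, then leaf substitution, which is the standard way such trees are obtained.

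Your substitution step is sound. The cleanest way to certify it is a description that holds in every decomposition tree. First, a leaf $x$ lies in $TS(G_t)$ iff the path from $t$ down to $x$ never passes from an $\oplus$-node to its right child. Second, two leaves are adjacent iff their lowest common ancestor $a$ is an $\otimes$- or $\oplus$-node and each leaf lies in the twin set of its own child of $a$.

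Now replace the leaf $u$ by a node $s$ with $u$ as left child and $v$ as right child. Every adjacency not involving $v$ is unchanged. The new leaf $v$ becomes adjacent to exactly $u$'s neighbours outside $\{u,v\}$ when $s$ is $\otimes$ or $\odot$, and to none of them when $s$ is $\oplus$. The edge $uv$ is created iff $s$ is not $\odot$. These are precisely the true twin, false twin and pendant extensions.

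Your invariant, that $N(y)\cap L(t)\in\{\emptyset, TS(G_t)\}$ for $y\notin L(t)$, follows from the same description and holds in every decomposition tree. So it needs no separate preservation argument.

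The one thing to change is the source for the $O(n+m)$ pruning sequence. To my recollection, Damiand, Habib and Paul (2001) showed that Hammer and Maffray's original algorithm fails on some inputs. Their corrected linear-time algorithm should carry that step. The tree construction itself then adds only $O(n)$.
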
\label{lem:dh-decomposition-tree}


\subsection{Families of partial solutions}
We employ a dynamic programming strategy to compute a maximum edge open packing set of a graph $G$. The algorithm begins by constructing a decomposition tree $T$ of $G$, rooted at $r$. It then processes the nodes of $T$ in a bottom-up manner, computing edge open packing (EOP) sets of the subgraphs $G[L(v)]$ for every $v \in T$. Since $G[L(r)] = G$, an EOP set of $G$ is obtained at the end of this procedure. Throughout this section, we use $E(TS(G))$ to denote the edges of the induced subgraph $G[TS(G)]$. 

Let $G$ be a distance-hereditary graph and let $TS(G)$ denote its twin set. For an arbitrary EOP set $S$ of $G$, exactly one of the following cases holds:
\begin{enumerate}[]
    \item The twin set contains at least one edge of $S$;
    \item The twin set contains no edge of $S$, but contains at least one endpoint of some edge in $S$;
    \item The twin set contains neither an edge of $S$ nor an endpoint of any edge of $S$.
\end{enumerate}

Accordingly, we classify EOP sets of $G$ into the families $P_1(G)$, $P_2(G)$, and $P_3(G)$. Let $\rho_i(G)$ denote the maximum cardinality of an EOP set in $P_i(G)$. Clearly, 
$\rho_e^{o}(G) = \max\{\rho_1(G), \rho_2(G), \rho_3(G)\}$. To compute the values of $\rho_i(G)$, for each $i$, we introduce additional types of EOP sets. For each vertex $x \in TS(G)$, let $B_x$ denote the subset of vertices in $N_{G\setminus TS(G)}(x)$ such that either $B_x = \emptyset$ or $B_x$ is an independent set. Define 
$t = \max_{x \in TS(G)} |B_x|$. 
Additionally, let $A$ be an independent subset of $TS(G)$. These definitions allow us to introduce the refined families of EOP sets required for the design of the algorithm.

\paragraph*{\textbf{Definitions of EOP Set Families}}

We define the following families of edge open packing sets of \(G\). Let
\(P(G)=\{\,S:S\text{ is an EOP set in }G\,\}\). Let
\(P_1(G)=\{\,S\in P(G):S\cap E(TS(G))\neq\emptyset\,\}\). Thus \(P_1(G)\)
contains all EOP sets that select at least one edge inside the twin set.
Let \(P_2(G)=\{\,S\in P(G)\setminus P_1(G):\exists\,e=uv\in S\text{ such that }
|\{u,v\}\cap TS(G)|=1\,\}\). Thus \(P_2(G)\) contains all EOP sets that do not
select an internal twin-set edge, but select at least one edge having exactly
one endpoint in \(TS(G)\). Finally, let
\(P_3(G)=P(G)\setminus(P_1(G)\cup P_2(G))\). Hence,  \(P_3(G)\) contains all EOP
sets that avoid the twin set completely.

Let us  define
\(P_4(i,G)=\{\,S\in P_3(G):\exists\,A\subseteq TS(G)\text{ such that }A\text{
	is independent}, |A|=i,\text{ and }V(S)\cap N(A)=\emptyset\,\}\) for every \(1\le i\le \alpha(G[TS(G)])\). Similarly,
for every \(0\le j\le t\), define
\(P_5(j,G)=\{\,S\in P_3(G):\exists\,B_x\text{ with }|B_x|=j,\,
V(S)\cap N_{G\setminus TS(G)}(x)=\emptyset,\text{ and }
V(S)\cap N_G(B_x)=\emptyset\,\}\).

%
%
%
%
%
%
%
%
%
%
%
%

We now present an intuitive explanation of each of the EOP set families defined above. The family $P(G)$ consists of all edge open packing sets of $G$. 
Within this collection, the set $P_1(G)$ contains all EOP sets that select at least one edge entirely inside the twin-set $TS(G)$; such packings directly use the internal structure of the twin-set. 
The family $P_2(G)$ contains those EOP sets that do not include an internal twin edge, but use an edge having exactly one endpoint in $TS(G)$, meaning the packing interacts with the twin-set only through boundary edges. 
The class $P_3(G)$ consists of all EOP sets that completely avoid the twin-set, selecting no edge that lies in $TS(G)$ and no edge incident to a vertex of $TS(G)$. 
The family $P_4(i,G)$ refines $P_3(G)$ by restricting attention to packings that also avoid the open neighbourhood of an independent set $A \subseteq TS(G)$ of size $i$, thereby avoiding both the twin-set and all vertices adjacent to these $i$ twin vertices. 
Finally, the family $P_5(j,G)$ provides a second refinement of $P_3(G)$ by considering EOP sets that avoid the neighbourhood of an independent set $B_x \subseteq N_{G\setminus TS(G)}(x)$ of size $j$ for some $x \in TS(G)$; such packings do not select edges near $x$, the vertices of $B_x$, or any of their neighbours. 







We now define the corresponding optimal values for the above defined sets.  
%
%
%
%
%
%
%

%
%
%
%
%
%

\begin{itemize}
	\item \(\rho_e^{o}(G)=\max\{\,|S|:S\in P(G)\,\}\).
	
	\item For \(1\le i\le 3\), let
	\(\rho_i(G)=\max\{\,|S|:S\in P_i(G)\,\}\).
	
	\item For each index \(i\), let
	\(\rho_4(i,G)=\max\{\,|S|:S\in P_4(i,G)\,\}\). Moreover,
	\(\rho_4(G)=\max_i \rho_4(i,G)\).
	
	\item For each index \(j\), let
	\(\rho_5(j,G)=\max\{\,|S|:S\in P_5(j,G)\,\}\). Moreover,
	\(\rho_5(G)=\displaystyle\max_{x\in TS(G)}\max_j \rho_5(j,G)\).
\end{itemize}

Let $p_l$ and $p_r$ be the maximum integers such that 
$\rho_4(G_l)=\rho_4(p_l, G_l)$ and $\rho_4(G_r)=\rho_4(p_r, G_r)$, respectively. 
Similarly, let $q_l$ and $q_r$ be the maximum integers such that 
$\rho_5(G_l)=\rho_5(q_l, G_l)$ and $\rho_5(G_r)=\rho_5(q_r, G_r)$. 
Based on the above discussion, we obtain the following observation which is trivial to see and therefore we omit the proof details. 

\begin{observation}\label{Obs: dh_graphs_1}
Let $G$ be a connected distance-hereditary graph. Then the maximum edge open packing number, $\rho_e^o(G) = \max\{\rho_1(G),\, \rho_2(G),\, \rho_3(G)\}$. 
\end{observation}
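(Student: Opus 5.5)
The plan is to show that the three families $P_1(G)$, $P_2(G)$, $P_3(G)$ partition the family $P(G)$ of all EOP sets of $G$. Once this is established, the identity follows by taking maxima: the maximum over a finite union of families equals the maximum of the maxima over the individual families.

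First I will check that every $S\in P(G)$ lies in exactly one of the three families. By definition, $P_1(G)$ consists of those $S$ with $S\cap E(TS(G))\neq\emptyset$. Next, $P_2(G)$ is defined as a subfamily of $P(G)\setminus P_1(G)$, so it is disjoint from $P_1(G)$. Finally, $P_3(G)=P(G)\setminus(P_1(G)\cup P_2(G))$, so it is disjoint from both others, and together the three families cover $P(G)$. This step is purely set-theoretic and needs no property of distance-hereditary graphs or of connectivity. I will also remark that $P_3(G)$ is the family of EOP sets using no edge with an endpoint in $TS(G)$. This holds because an edge of $S$ with both endpoints in $TS(G)$ lies in $E(TS(G))$, since that edge set is defined via the induced subgraph $G[TS(G)]$. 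It is exactly the informal third case listed before the definitions.

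Second, since $P(G)=P_1(G)\,\dot\cup\,P_2(G)\,\dot\cup\,P_3(G)$, we get
\[
\rho_e^o(G)=\max_{S\in P(G)}|S|=\max\Bigl\{\max_{S\in P_1(G)}|S|,\ \max_{S\in P_2(G)}|S|,\ \max_{S\in P_3(G)}|S|\Bigr\}=\max\{\rho_1(G),\rho_2(G),\rho_3(G)\}.
\]

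The only real subtlety, and the step I expect to need care, is the convention for maxima over empty families: $P_1(G)$ or $P_2(G)$ may be empty, for instance when $TS(G)$ is independent. I will adopt the convention that $\max\emptyset=-\infty$ (or $0$). The identity is then unaffected, because $\emptyset\in P_3(G)$ always holds: the empty edge set is trivially an EOP set and avoids the twin set. Hence $P_3(G)\neq\emptyset$, $\rho_3(G)\ge 0$, and the overall maximum is always attained by a nonempty family.
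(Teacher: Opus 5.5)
Your proposal is correct and takes the same approach as the paper: the three families partition $P(G)$ by construction, so the maximum over $P(G)$ equals the largest of the three family maxima. Your two extra remarks are correct details that the paper leaves implicit. Connectivity plays no role, and since $\emptyset\in P_3(G)$, the identity stays valid even when $P_1(G)$ or $P_2(G)$ is empty.
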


\begin{proof}
	Every edge open packing set of \(G\) falls into exactly one of the three
	classes \(P_1(G)\), \(P_2(G)\), and \(P_3(G)\), according to whether it uses an
	edge inside \(TS(G)\), uses no such edge but has an edge incident with
	\(TS(G)\), or avoids \(TS(G)\) completely. Therefore the maximum value over all
	edge open packing sets is exactly the maximum of these three quantities.
\end{proof}

\subsection{Recurrences for the decomposition operations}

We now give the recurrences used by the dynamic program. The proofs are based
on a case distinction according to the way an optimum edge open packing set
interacts with the twin set of the graph produced at the current node.

%
%
%
%
%
%
%

\begin{lemma}\label{dh_lemma_1}
	Let \(G=G_l\otimes G_r\). Then the following hold.
	\begin{enumerate}[label=\textup{(\roman*)}]
		\item \(\rho_1(G)=\max\{\rho_1(G_l)+\rho_3(G_r),\rho_3(G_l)+\rho_1(G_r),
		\rho_4(G_l)+\rho_5(G_r)+p_l+q_r,\rho_5(G_l)+\rho_4(G_r)+p_r+q_l\}\).
		
		\item \(\rho_2(G)=\max\{\rho_2(G_l)+\rho_3(G_r),
		\rho_3(G_l)+\rho_2(G_r)\}\).
		
		\item \(\rho_3(G)=\rho_3(G_l)+\rho_3(G_r)\).
		
		\item \(\rho_4(i,G)=\max\{\rho_4(i,G_l)+\rho_3(G_r),
		\rho_3(G_l)+\rho_4(i,G_r)\}\), and hence
		\(\rho_4(G)=\max_i\rho_4(i,G)\).
		
		\item \(\rho_5(j,G)=\max\{\rho_5(j,G_l)+\rho_3(G_r),
		\rho_3(G_l)+\rho_5(j,G_r)\}\), and hence
		\(\rho_5(G)=\max_j\rho_5(j,G)\).
	\end{enumerate}
\end{lemma}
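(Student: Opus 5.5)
The plan is to argue each item by a two-direction exchange argument, using the structure of $G=G_l\otimes G_r$. In $G$, every vertex of $TS(G_l)$ is adjacent to every vertex of $TS(G_r)$, no other edges are added, and $TS(G)=TS(G_l)\cup TS(G_r)$. For an EOP set $S$ of $G$, write $S_l=S\cap E(G_l)$, $S_r=S\cap E(G_r)$ and $S_\times=S\setminus(S_l\cup S_r)$; the edges of $S_\times$ join $TS(G_l)$ to $TS(G_r)$. The key structural fact is the following. If $S_l$ contains an edge $e$ with an endpoint in $TS(G_l)$, then any edge $f\in S_r$ that has an endpoint in $TS(G_r)$ or a neighbour of $TS(G_r)$ conflicts with $e$ through a cross edge. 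Consequently $S_r$ must avoid $TS(G_r)$ completely, i.e. $S_r\in P_3(G_r)$, and symmetrically. Conversely, if one side avoids its twin set entirely, no cross edge touches its edges, so the union of an EOP set of $G_l$ with one of $G_r$ remains an EOP set of $G$. I would also check the slightly subtler conflict in which a twin-set edge on one side is joined through a cross edge to a twin-neighbour edge on the other side.

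For (iii), an EOP set avoiding $TS(G)$ splits as $S_l\cup S_r$, with each part in $P_3$ of its side. No cross edge can join them, because a cross edge has both ends in the twin sets, which $S$ avoids. This gives equality. Items (ii), (iv) and (v) follow the same template. Any edge of $S$ touching $TS(G_l)$ forces the right-hand part into $P_3(G_r)$ by the structural fact. Moreover, the extra avoidance constraints defining $P_4(i,\cdot)$ and $P_5(j,\cdot)$ lie on one side, since $N(A)$ restricted to $G_l$ equals $N_{G_l}(A)$ plus $TS(G_r)$, which is already avoided. So the optimum decomposes as the stated maximum, and the reverse inequality comes from taking unions. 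For (ii) I must additionally rule out $S_\times\ne\emptyset$. A cross edge has both endpoints in $TS(G)$, so it lies in $E(TS(G))$ and would put $S$ into $P_1(G)$.

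Item (i) is the main obstacle. The case where the internal twin-set edge lies inside $G_l$ or inside $G_r$ gives the first two terms, as above. The new case is $S$ containing cross edges $xy$ with $x\in TS(G_l)$ and $y\in TS(G_r)$. Two cross edges conflict unless they share an endpoint, because two disjoint cross edges are joined by another cross edge. Hence $S_\times$ is a star centred at some $x$ or $y$. Suppose the centre is $y\in TS(G_r)$ with leaves $A\subseteq TS(G_l)$. Then $A$ must be independent, since an edge inside $A$ would create a conflict, and the remaining edges of $S_l$ must avoid $N(A)$, giving the $\rho_4(|A|,G_l)$ constraint. On the right, the remaining edges must avoid the neighbourhood of $y$ outside the twin set. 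Pendant-like edges $yb$ with $b\in B_y$ independent can be added to the star, and the remaining edges must avoid $N(B_y)$, which is the $\rho_5(|B_y|,G_r)$ constraint. The star then has $|A|+|B_y|$ edges, giving $\rho_4(G_l)+\rho_5(G_r)+p_l+q_r$, where the choice of $p_l,q_r$ as the largest maximizing indices justifies adding them. The symmetric centring gives the fourth term.

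The hard part is verifying the two inequalities for this star case. I would first try to show that an optimum star-type solution can be normalized so that its star size equals the chosen maximal indices. Concretely, the objective is $\rho_4(i,G_l)+i+\rho_5(j,G_r)+j$ summed appropriately, and I must argue that maximizing $\rho_4(i,\cdot)$ first and then taking the largest $i$ attains the joint optimum. If this fails in general, I would restate the term as $\max_i(\rho_4(i,G_l)+i)$ and note that this is what the proof actually needs.
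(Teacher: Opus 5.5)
\paragraph{Verdict.} Your arguments for (ii)--(v) are essentially sound. Your decomposition of the cross-edge case in (i) is also right: a star centred at $y\in TS(G_r)$ with independent leaves $A\subseteq TS(G_l)$ and $B_y$, whose left remainder lies in $P_4(|A|,G_l)$ and right remainder in $P_5(|B_y|,G_r)$. The genuine gap is the step you defer. The normalization ``the joint optimum is attained at $i=p_l$, $j=q_r$'' is false, and so is the upper bound in (i) as stated.

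\paragraph{Why the normalization fails.} Shrinking $A$ only relaxes the avoidance condition, so $\rho_4(i,\cdot)$ is non-increasing in $i$. Hence $\rho_4(G_l)=\rho_4(1,G_l)$, and $p_l$ only records how long that first value persists. A drop of one in $\rho_4$ can be outweighed by a large gain in $i$.

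\paragraph{Counterexample.} Let $k\ge 3$ and $G_l=a_1\odot\bigl((a_2\odot\cdots\odot a_k)\oplus(c\oplus d)\bigr)$. Its edges are $a_jc$ for $2\le j\le k$ and $cd$, and $TS(G_l)=\{a_1,\dots,a_k\}$ is independent. Let $G_r$ be the single vertex $y$. In $G=G_l\otimes G_r$ the star $\{ya_1,\dots,ya_k\}$ is an EOP set made of twin-set edges, so $\rho_1(G)\ge k$. On the other side of the formula:
\begin{itemize}
\item $\rho_1(G_l)=\rho_1(G_r)=-\infty$.
\item $\rho_4(1,G_l)=1$ (witness $\{a_1\}$, packing $\{cd\}$), while $\rho_4(i,G_l)=0$ for $i\ge 2$; so $p_l=1$.
\item $\rho_5(G_r)=0$ with $q_r=0$, and $\rho_4(G_r)=0$ with $p_r=1$.
\item $\rho_5(G_l)=1$ is attained only at $j=0$, so $q_l=0$. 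At $j=1$ the only witnesses are $x=a_j$ ($j\ge 2$) with $B_x=\{c\}$, and these forbid $cd$.
\end{itemize}
Both star terms equal $2<k$, so the formula returns $2$.

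\paragraph{Relation to the paper and your fallback.} No proof of (i) as stated exists. The paper's own proof simply asserts that the star case ``contributes'' $\rho_5(G_l)+\rho_4(G_r)+q_l+p_r$ (and its mirror image), and it fails at exactly this point. Your fallback gives the correct value: replace the third term by $\max_{i\ge 1}(\rho_4(i,G_l)+i)+\max_{j\ge 0}(\rho_5(j,G_r)+j)$, and the fourth symmetrically. Your own argument already proves this. The left constraints depend only on $A$ and the right ones only on $(y,B_y)$, and every $y\in TS(G_r)$ is adjacent to all of $TS(G_l)$, so the two maxima separate. That is, however, a corrected lemma; the stated terms are only lower bounds. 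You should commit to this rather than leaving the normalization open.

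\paragraph{Smaller repairs.}
\begin{itemize}
\item Your ``structural fact'' overstates. An edge of $S_r$ that avoids $TS(G_r)$ never conflicts with an edge of $S_l$, even if its endpoints neighbour $TS(G_r)$. A common edge must join endpoints directly, and cross edges join only twin-set vertices. That clause contradicts your own union argument and should be dropped, as should the ``subtler conflict'', which cannot occur.
\item For the first two terms of (i), ``as above'' does not exclude cross edges. You need that an edge $uv$ inside $TS(G_l)$ conflicts with every cross edge $xy$: via $uy$ if $x\ne u$, and via $vy$ if $x=u$.
\item In (iv), state explicitly that an independent $A\subseteq TS(G)$ cannot meet both $TS(G_l)$ and $TS(G_r)$.
\end{itemize}
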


\begin{proof}
	We use the fact that, in the true twin operation, \(TS(G)=TS(G_l)\cup TS(G_r)\) and every vertex of \(TS(G_l)\) is adjacent to every vertex of \(TS(G_r)\). This complete adjacency is the only interaction between the two sides relevant to the recurrences. In particular, whenever a selected edge uses the twin set on one side, any selected edge touching the twin set on the other side may create a forbidden common edge, unless all such selected edges belong to the same induced star.
	
	We first prove the recurrence for \(\rho_1(G)\). Let \(S\in P_1(G)\) be an optimum set. Since \(S\) contains an edge inside \(E(TS(G))\), there are three possibilities. If \(S\) contains an edge inside \(TS(G_l)\), then the right side must avoid \(TS(G_r)\); otherwise the complete adjacency between \(TS(G_l)\) and \(TS(G_r)\) creates a forbidden common edge. Thus the contribution is at most \(\rho_1(G_l)+\rho_3(G_r)\), and this value is attainable by combining an optimum set from \(P_1(G_l)\) with an optimum set from \(P_3(G_r)\). Symmetrically, if the selected twin edge lies inside \(TS(G_r)\), the contribution is exactly \(\rho_3(G_l)+\rho_1(G_r)\).
	
	It remains to consider the case where \(S\) contains a selected cross edge between \(TS(G_l)\) and \(TS(G_r)\). Then all selected cross edges must form one induced star. If the centre of this star lies in \(TS(G_l)\), then the selected leaves in \(TS(G_r)\) form an independent set counted by the \(P_4\)-condition on \(G_r\), and the selected external leaves adjacent to the centre on the left side are controlled by the \(P_5\)-condition on \(G_l\). Therefore, this case contributes \(\rho_5(G_l)+\rho_4(G_r)+q_l+p_r\). If the centre lies in \(TS(G_r)\), the symmetric contribution is \(\rho_4(G_l)+\rho_5(G_r)+p_l+q_r\). Hence, 
	\(\rho_1(G)=\max\{\rho_1(G_l)+\rho_3(G_r),\rho_3(G_l)+\rho_1(G_r),\rho_4(G_l)+\rho_5(G_r)+p_l+q_r,\rho_5(G_l)+\rho_4(G_r)+p_r+q_l\}\).
	
	For \(\rho_2(G)\), let \(S\in P_2(G)\) be optimum. Then \(S\) contains no selected edge inside \(TS(G)\), but it contains a selected edge with exactly one endpoint in \(TS(G)\). If this endpoint lies in \(TS(G_l)\), then the right side must avoid \(TS(G_r)\), and the contribution is \(\rho_2(G_l)+\rho_3(G_r)\). If the endpoint lies in \(TS(G_r)\), the symmetric contribution is \(\rho_3(G_l)+\rho_2(G_r)\). Both values are attainable by taking the corresponding optimum packings on the two sides. Thus \(\rho_2(G)=\max\{\rho_2(G_l)+\rho_3(G_r),\rho_3(G_l)+\rho_2(G_r)\}\).
	
	For \(\rho_3(G)\), every selected edge avoids \(TS(G)\). Hence the two sides are independent with respect to the packing, and the restriction of any optimum set to each side belongs to \(P_3\). Conversely, two optimum \(P_3\)-packings on the two sides can be combined because both avoid their twin sets. Therefore \(\rho_3(G)=\rho_3(G_l)+\rho_3(G_r)\).
	
	Now fix an index \(i\). In a set counted by \(P_4(i,G)\), there is an independent set \(A\subseteq TS(G)\) of size \(i\). Since \(TS(G_l)\) is completely adjacent to \(TS(G_r)\), the set \(A\) must lie entirely in one side. If \(A\subseteq TS(G_l)\), then the left side contributes \(\rho_4(i,G_l)\), while the right side must avoid \(TS(G_r)\), contributing \(\rho_3(G_r)\). The symmetric case gives \(\rho_3(G_l)+\rho_4(i,G_r)\). Hence \(\rho_4(i,G)=\max\{\rho_4(i,G_l)+\rho_3(G_r),\rho_3(G_l)+\rho_4(i,G_r)\}\), and  \(\rho_4(G)=\max_i\rho_4(i,G)\).
	
	Finally, fix an index \(j\). A set counted by \(P_5(j,G)\) is witnessed by a vertex \(x\in TS(G)\) and an independent set \(B_x\subseteq N_{G-TS(G)}(x)\) of size \(j\). If \(x\in TS(G_l)\), then the left side contributes \(\rho_5(j,G_l)\), while the right side must avoid \(TS(G_r)\), contributing \(\rho_3(G_r)\). The case \(x\in TS(G_r)\) is symmetric. Therefore \(\rho_5(j,G)=\max\{\rho_5(j,G_l)+\rho_3(G_r),\rho_3(G_l)+\rho_5(j,G_r)\}\), and taking the maximum over \(j\) gives \(\rho_5(G)=\max_j\rho_5(j,G)\).
\end{proof}

\begin{lemma}\label{dh_lemma_2}
	Let \(G=G_l\odot G_r\). Then the following hold.
	\begin{enumerate}[label=\textup{(\roman*)}]
		\item \(\rho_1(G)=\max\{\rho_1(G_l)+\rho_e^o(G_r),
		\rho_e^o(G_l)+\rho_1(G_r)\}\).
		
		\item \(\rho_2(G)=\max\{\rho_2(G_l)+\rho_2(G_r),
		\rho_2(G_l)+\rho_3(G_r),\rho_3(G_l)+\rho_2(G_r)\}\).
		
		\item \(\rho_3(G)=\rho_3(G_l)+\rho_3(G_r)\).
		
		\item \(\rho_4(i,G)=\max\{\rho_4(i,G_l)+\rho_3(G_r),
		\rho_3(G_l)+\rho_4(i,G_r),
		\rho_4(i_1,G_l)+\rho_4(i_2,G_r):i=i_1+i_2\}\), and hence
		\(\rho_4(G)=\max_i\rho_4(i,G)\).
		
		\item \(\rho_5(j,G)=\max\{\rho_5(j,G_l)+\rho_3(G_r),
		\rho_3(G_l)+\rho_5(j,G_r)\}\), and hence
		\(\rho_5(G)=\max_j\rho_5(j,G)\).
	\end{enumerate}
\end{lemma}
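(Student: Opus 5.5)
The plan follows the proof of Lemma~\ref{dh_lemma_1}. The structural fact that drives everything is simple: in the false twin operation \(E(G)=E(G_l)\cup E(G_r)\), so \(G\) is the disjoint union of \(G_l\) and \(G_r\). No edge joins the two sides, so no edge of \(G\) can link a selected edge of \(G_l\) to a selected edge of \(G_r\). Hence a set \(S\subseteq E(G)\) is an EOP set of \(G\) if and only if \(S_l=S\cap E(G_l)\) and \(S_r=S\cap E(G_r)\) are EOP sets of \(G_l\) and \(G_r\). Also \(TS(G)=TS(G_l)\cup TS(G_r)\) and \(E(TS(G))=E(TS(G_l))\cup E(TS(G_r))\). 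For each item I will (a) take an optimal \(S\) in the relevant family and classify \(S_l,S_r\) into \(P_1,P_2,P_3\) of their sides, giving the upper bound; and (b) show that every combination listed on the right-hand side is itself an EOP set of the required type in \(G\), giving the lower bound.

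For (i), \(S\in P_1(G)\) means some selected edge lies in \(E(TS(G_l))\) or in \(E(TS(G_r))\). In the first case \(S_l\in P_1(G_l)\) and \(S_r\) is an arbitrary EOP set of \(G_r\), so \(|S|\le \rho_1(G_l)+\rho_e^o(G_r)\). The second case is symmetric. Conversely, any such union is an EOP set lying in \(P_1(G)\). For (ii), \(S\in P_2(G)\) means neither side has an edge inside its twin set and at least one side has a boundary edge. So \((S_l,S_r)\) is of type \((P_2,P_2)\), \((P_2,P_3)\) or \((P_3,P_2)\), which gives exactly the three terms, each attainable. For (iii), \(S\) avoids \(TS(G)\) exactly when both restrictions avoid their twin sets, so \(\rho_3(G)=\rho_3(G_l)+\rho_3(G_r)\).

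For (iv), take a witness \(A\subseteq TS(G)\) that is independent with \(|A|=i\). Since no edges join the sides, \(A=A_l\cup A_r\) with both parts independent. Moreover \(N_G(A)\cap V(G_l)=N_{G_l}(A_l)\), and likewise on the right. So \(S\in P_4(i,G)\) splits into three cases:
\begin{itemize}
\item \(A_r=\emptyset\): then \(S_l\in P_4(i,G_l)\) and \(S_r\in P_3(G_r)\);
\item \(A_l=\emptyset\): the symmetric case;
\item both parts nonempty: then \(S_l\in P_4(i_1,G_l)\) and \(S_r\in P_4(i_2,G_r)\) with \(i_1+i_2=i\).
\end{itemize}
Conversely, the union of the witnesses from the two sides is independent in \(G\), so it witnesses membership in \(P_4(i,G)\). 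Item (v) works the same way: the vertex \(x\) lies on one side, say in \(TS(G_l)\), and then \(B_x\subseteq N_{G\setminus TS(G)}(x)\subseteq V(G_l)\). The neighbourhood conditions therefore only constrain \(S_l\), which must lie in \(P_5(j,G_l)\), while \(S_r\) only has to avoid \(TS(G_r)\). The "hence" clauses then follow directly from the definitions of \(\rho_4(G)\) and \(\rho_5(G)\).

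The main obstacle is bookkeeping, not depth. Two places need care. First, in (iv) the split \(i=i_1+i_2\) must stay within the ranges \(1\le i_1\le\alpha(G_l[TS(G_l)])\) and \(1\le i_2\le\alpha(G_r[TS(G_r)])\), which is why the cases \(i_1=0\) or \(i_2=0\) are handled separately by the \(\rho_3\) terms. Second, in (ii) and (v) the side that is not the "active" one must still be forced into \(P_3\). I would follow the paper's convention (as in Lemma~\ref{dh_lemma_1}), read the families \(P_3,P_4,P_5\) as sets that avoid the twin set, and argue membership side by side. If a side were allowed to be in \(P_1\) or \(P_2\) there, the combined set would fall into \(P_1(G)\) or \(P_2(G)\) instead of the stated family, so restricting to \(P_3\) is forced by the definitions.
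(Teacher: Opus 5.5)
Your proposal is correct and follows essentially the same route as the paper: you decompose $S$ into $S_l=S\cap E(G_l)$ and $S_r=S\cap E(G_r)$ using that $G$ is the disjoint union of $G_l$ and $G_r$, and then classify the two restrictions family by family (you are, if anything, more explicit than the paper about why $N_G(A)$ and $N_{G\setminus TS(G)}(x)$ stay on one side, and about the index ranges in the split $i=i_1+i_2$). One small slip is in your closing remark: in (ii) the inactive side is not forced into $P_3$, since it may also lie in $P_2$; this gives the $\rho_2(G_l)+\rho_2(G_r)$ term, which your own case analysis for (ii) already handles correctly.
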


	\begin{proof}
		In the false twin operation, there are no edges between \(G_l\) and \(G_r\), and \(TS(G)=TS(G_l)\cup TS(G_r)\). Hence, an edge open packing set in \(G\) is obtained by combining edge open packing sets from the two sides, and no new conflict is created between a selected edge of \(G_l\) and a selected edge of \(G_r\). The only thing that has to be tracked is how the chosen sets interact with the new twin set \(TS(G)\).
		
		For \(\rho_1(G)\), a counted set contains a selected edge inside \(TS(G)\). Since there are no edges between \(G_l\) and \(G_r\), such an edge lies either inside \(TS(G_l)\) or inside \(TS(G_r)\). In the first case, the left side contributes \(\rho_1(G_l)\), while the right side may be any edge open packing set, contributing \(\rho_e^o(G_r)\). The second case is symmetric. Therefore,  \(\rho_1(G)=\max\{\rho_1(G_l)+\rho_e^o(G_r),\rho_e^o(G_l)+\rho_1(G_r)\}\).
		
		For \(\rho_2(G)\), a counted set contains no selected edge inside \(TS(G)\), but it contains at least one selected edge with exactly one endpoint in \(TS(G)\). This may happen on the left side only, on the right side only, or on both sides. These three cases give the values \(\rho_2(G_l)+\rho_3(G_r)\), \(\rho_3(G_l)+\rho_2(G_r)\), and \(\rho_2(G_l)+\rho_2(G_r)\), respectively. Since the two sides are anticomplete, all three combinations are feasible, and hence \(\rho_2(G)=\max\{\rho_2(G_l)+\rho_2(G_r),\rho_2(G_l)+\rho_3(G_r),\rho_3(G_l)+\rho_2(G_r)\}\).
		
		For \(\rho_3(G)\), a counted set avoids the whole twin set \(TS(G)\). Therefore its restrictions to \(G_l\) and \(G_r\) belong to \(P_3(G_l)\) and \(P_3(G_r)\), respectively. Conversely, any two such packings can be combined because there are no edges between the two sides. Thus \(\rho_3(G)=\rho_3(G_l)+\rho_3(G_r)\).
		
		Now fix an index \(i\). A set counted by \(P_4(i,G)\) is a member of \(P_3(G)\), and it is witnessed by an independent set \(A\subseteq TS(G)\) of size \(i\). Since there are no edges between \(TS(G_l)\) and \(TS(G_r)\), the set \(A\) may lie completely in \(TS(G_l)\), completely in \(TS(G_r)\), or split between the two sides. If \(A\subseteq TS(G_l)\), then the left side contributes \(\rho_4(i,G_l)\), while the right side must still avoid \(TS(G_r)\), contributing \(\rho_3(G_r)\). If \(A\subseteq TS(G_r)\), the symmetric contribution is \(\rho_3(G_l)+\rho_4(i,G_r)\). Finally, if \(A=A_l\cup A_r\), where \(A_l\subseteq TS(G_l)\), \(A_r\subseteq TS(G_r)\), \(|A_l|=i_1\), \(|A_r|=i_2\), and \(i=i_1+i_2\), then both sides must satisfy the corresponding \(P_4\)-conditions, giving \(\rho_4(i_1,G_l)+\rho_4(i_2,G_r)\). Therefore \(\rho_4(i,G)=\max\{\rho_4(i,G_l)+\rho_3(G_r),\rho_3(G_l)+\rho_4(i,G_r),\rho_4(i_1,G_l)+\rho_4(i_2,G_r):i=i_1+i_2\}\). Taking the maximum over \(i\) gives \(\rho_4(G)=\max_i\rho_4(i,G)\).
		
		Finally, fix an index \(j\). A set counted by \(P_5(j,G)\) is also a member of \(P_3(G)\). It is witnessed by a vertex \(x\in TS(G)\) and an independent set \(B_x\subseteq N_{G-TS(G)}(x)\) of size \(j\). If \(x\in TS(G_l)\), then, because there are no edges between \(G_l\) and \(G_r\), the set \(B_x\) lies entirely in \(G_l\). Thus the left side contributes \(\rho_5(j,G_l)\), while the right side must avoid \(TS(G_r)\), contributing \(\rho_3(G_r)\). The case \(x\in TS(G_r)\) is symmetric and gives \(\rho_3(G_l)+\rho_5(j,G_r)\). Hence \(\rho_5(j,G)=\max\{\rho_5(j,G_l)+\rho_3(G_r),\rho_3(G_l)+\rho_5(j,G_r)\}\), and taking the maximum over \(j\) gives \(\rho_5(G)=\max_j\rho_5(j,G)\).
	\end{proof}

\begin{lemma}\label{dh_lemma_3}
	Let \(G=G_l\oplus G_r\). Then the following hold.
	\begin{enumerate}[label=\textup{(\roman*)}]
		\item \(\rho_1(G)=\rho_1(G_l)+\rho_3(G_r)\).
		
		\item \(\rho_2(G)=\max\{\rho_2(G_l)+\rho_3(G_r),
		\rho_4(G_l)+\rho_5(G_r)+p_l+q_r,
		\rho_5(G_l)+\rho_4(G_r)+p_r+q_l\}\).
		
		\item \(\rho_3(G)=\rho_3(G_l)+\rho_e^o(G_r)\).
		
		\item \(\rho_4(i,G)=\rho_4(i,G_l)+\rho_3(G_r)\), and hence
		\(\rho_4(G)=\max_i\rho_4(i,G)\).
		
		\item \(\rho_5(j,G)=\max\{\rho_5(j,G_l)+\rho_3(G_r),
		\rho_5(j_1,G_l)+\rho_4(j_2,G_r):j=j_1+j_2\text{ and }j_2\neq 0\}\), and
		hence \(\rho_5(G)=\max_j\rho_5(j,G)\).
	\end{enumerate}
\end{lemma}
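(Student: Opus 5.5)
The plan mirrors the proofs of Lemma~\ref{dh_lemma_1} and Lemma~\ref{dh_lemma_2}. The only structural facts used are those of the attachment operation. First, $TS(G)=TS(G_l)$, so the vertices of $TS(G_r)$ are no longer twin-set vertices of $G$. Second, every vertex of $TS(G_l)$ is adjacent to every vertex of $TS(G_r)$, and there are no other edges between the two sides. Hence any selected edge of $G_l$ touching $TS(G_l)$ conflicts, via a cross edge, with any selected edge of $G_r$ touching $TS(G_r)$. The exception is when all such edges lie in a single induced star through the cross edges. For each item I take an optimum set $S$ of the family in question, split it into $S_l=S\cap E(G_l)$, $S_r=S\cap E(G_r)$ and $S_\times$ (the cross edges), bound $|S|$ by the stated expression, and then check that combining optimum partial solutions attains it.

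For (i), an edge of $S$ inside $TS(G)=TS(G_l)$ is adjacent through cross edges to all of $TS(G_r)$. So $S_\times=\emptyset$, $S_r$ must avoid $TS(G_r)$, and $S_r\in P_3(G_r)$, $S_l\in P_1(G_l)$. Conversely, such a pair never creates a conflict, since no edge joins $V(S_r)$ to $V(S_l)$. For (iii), $S$ avoids $TS(G_l)$, so no cross edge has an endpoint in $V(S)\cap V(G_l)$. Edges of $S_r$ may then touch $TS(G_r)$ freely, because their only new neighbours lie in $TS(G_l)$, which carries no selected edge. This gives $\rho_3(G_l)+\rho_e^o(G_r)$. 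Item (iv) follows the same way: the witness $A$ lies in $TS(G_l)$ and $N_G(A)\supseteq TS(G_r)$, which forces $S_r\in P_3(G_r)$.

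Item (ii) is the main case. If $S_\times=\emptyset$, the boundary edge lies in $G_l$, and as in (i) we get $\rho_2(G_l)+\rho_3(G_r)$. If $S_\times\neq\emptyset$, I first show the selected cross edges form one induced star, exactly as in Lemma~\ref{dh_lemma_1}. If the centre is $x\in TS(G_l)$, its leaves in $TS(G_r)$ form an independent set counted by $P_4$ on $G_r$. The remaining selected edges must avoid the neighbourhood of these leaves on the right, giving the $\rho_4(G_r)+p_r$ part. The centre's own external selected neighbourhood on the left is governed by $P_5(G_l)$, giving $\rho_5(G_l)+q_l$. The case of a centre in $TS(G_r)$ is symmetric. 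The main obstacle is this case: I must verify that the star edges plus the $P_4$/$P_5$ partial solutions really form an EOP, that is, that the $P_5$ condition $V(S)\cap N_G(B_x)=\emptyset$ exactly rules out conflicts with the star leaves. Conversely, I must show that every optimum set decomposes this way, with the counts $p,q$ being the maximizing indices. I would try to obtain both directions by checking the two conflict patterns, namely a path through a cross edge and a triangle, against the definitions.

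For (v), $x\in TS(G_l)$ and $B_x\subseteq N_{G-TS(G)}(x)$, which now may include vertices of $TS(G_r)$. I would split $B_x$ into $j_1$ vertices on the left and $j_2$ in $TS(G_r)$. When $j_2=0$, the right side must avoid $N(x)\supseteq TS(G_r)$, giving $\rho_5(j,G_l)+\rho_3(G_r)$. When $j_2\neq 0$, the right part of $B_x$ is an independent subset of $TS(G_r)$ whose neighbourhood is avoided, which is precisely the $P_4(j_2,G_r)$ condition. The left part gives $P_5(j_1,G_l)$. Feasibility and optimality then follow as before, and taking maxima over the indices gives the $\rho_4(G)$ and $\rho_5(G)$ statements.
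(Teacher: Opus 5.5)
Your arguments for (i), (iii), (iv) and (v) are correct, and your decomposition in (ii) is the right one. The trouble is exactly the step you flagged, and it cannot be closed, because (ii) is false as stated. The paper's proof makes the same leap, appealing to the star-counting argument of Lemma~\ref{dh_lemma_1} without justifying it.

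Take a cross star centred at $x\in TS(G_l)$ with leaf set $A\subseteq TS(G_r)$, $|A|=i\ge 1$, and leaf set $B\subseteq N_{G_l}(x)\setminus TS(G_l)$, $|B|=j$. Your argument correctly gives $|S|\le \rho_5(j,G_l)+j+\rho_4(i,G_r)+i$, where $i,j$ are the actual leaf counts of the optimum. To reach the stated bound you would need $\rho_5(j,G_l)+j\le \rho_5(G_l)+q_l$ for every $j$. Since $P_5(j,\cdot)\subseteq P_5(j-1,\cdot)$, the values $\rho_5(j,G_l)$ are non-increasing in $j$. So the largest index $q_l$ attaining their maximum need not maximise $\rho_5(j,G_l)+j$. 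The same applies to $p_r$ and $\rho_4(\cdot,G_r)$.

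Here is a concrete failure. Take $G_l=x\oplus\bigl((b_1\odot b_2)\oplus(c\oplus d)\bigr)$, which has edges $xb_1,xb_2,b_1c,b_2c,cd$ and $TS(G_l)=\{x\}$, and let $G_r$ be the single vertex $y$. Then:
\begin{itemize}
\item $\rho_2(G_l)=2$ and $\rho_3(G_r)=0$;
\item $\rho_4(G_l)=1$ with $p_l=1$ (witness $\{cd\}$);
\item $\rho_5(G_r)=0$ with $q_r=0$, and $\rho_4(G_r)=0$ with $p_r=1$;
\item $\rho_5(0,G_l)=1$ but $\rho_5(1,G_l)=\rho_5(2,G_l)=0$, since any $b_i\in B_x$ puts $c$ into $N(B_x)$; hence $\rho_5(G_l)=1$ and $q_l=0$.
\end{itemize}
The right-hand side of (ii) is therefore $\max\{2,2,2\}=2$. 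Yet $\{xy,xb_1,xb_2\}$ is an edge open packing of $G$ with no edge inside $TS(G)=\{x\}$, and $xy$ meets $TS(G)$ exactly once, so $\rho_2(G)\ge 3$.

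What your decomposition does prove, in both directions, is a corrected identity; the converse construction of a star plus a $P_4$-part and a $P_5$-part is feasible for every choice of indices. Set $\hat\rho_4(H)=\max_{i\ge 1}\bigl(\rho_4(i,H)+i\bigr)$ and $\hat\rho_5(H)=\max_{j}\bigl(\rho_5(j,H)+j\bigr)$. Then
\[
\rho_2(G)=\max\{\rho_2(G_l)+\rho_3(G_r),\ \hat\rho_4(G_l)+\hat\rho_5(G_r),\ \hat\rho_5(G_l)+\hat\rho_4(G_r)\}.
\]
The true-twin recurrence in Lemma~\ref{dh_lemma_1}(i), which you planned to mirror, has the same defect and needs the same correction.
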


\begin{proof}
	We use the defining property of the attachment operation. In \(G=G_l\oplus G_r\), every vertex of \(TS(G_l)\) is adjacent to every vertex of \(TS(G_r)\), but the new twin set is \(TS(G)=TS(G_l)\). Thus only the left twin set remains active as the twin set of the combined graph.
	
	We first consider \(\rho_1(G)\). A set counted by \(P_1(G)\) contains a selected edge inside \(TS(G)\). Since \(TS(G)=TS(G_l)\), such an edge must lie inside \(TS(G_l)\). Once such an edge is selected, the right side must avoid \(TS(G_r)\); otherwise the complete adjacency between \(TS(G_l)\) and \(TS(G_r)\) would create a forbidden common edge. Hence the left side contributes \(\rho_1(G_l)\), and the right side contributes \(\rho_3(G_r)\). Conversely, an optimum set from \(P_1(G_l)\) together with an optimum set from \(P_3(G_r)\) gives a valid member of \(P_1(G)\). Therefore \(\rho_1(G)=\rho_1(G_l)+\rho_3(G_r)\).
	
	Next consider \(\rho_2(G)\). A set counted by \(P_2(G)\) contains no selected edge inside \(TS(G_l)\), but it contains at least one selected edge with exactly one endpoint in \(TS(G_l)\). There are two possible ways this can happen. The selected edge may lie completely in \(G_l\), in which case the left side contributes \(\rho_2(G_l)\), and the right side must avoid \(TS(G_r)\). This gives the term \(\rho_2(G_l)+\rho_3(G_r)\). The other possibility is that the selected edges with one endpoint in \(TS(G_l)\) are cross edges between \(TS(G_l)\) and \(TS(G_r)\). Since the selected cross edges must form one induced star, the same star-counting argument used for the true twin operation applies. If the star is represented by a \(P_4\)-condition on \(G_l\) and a \(P_5\)-condition on \(G_r\), we obtain \(\rho_4(G_l)+\rho_5(G_r)+p_l+q_r\). If the roles are reversed, we obtain \(\rho_5(G_l)+\rho_4(G_r)+p_r+q_l\). The avoidance conditions in \(P_3\), \(P_4\), and \(P_5\) ensure that these constructions introduce no forbidden common edge. Hence \(\rho_2(G)=\max\{\rho_2(G_l)+\rho_3(G_r),\rho_4(G_l)+\rho_5(G_r)+p_l+q_r,\rho_5(G_l)+\rho_4(G_r)+p_r+q_l\}\).
	
	For \(\rho_3(G)\), a counted set avoids \(TS(G)=TS(G_l)\). Hence the left part must belong to \(P_3(G_l)\). The right side is not part of the new twin set, and therefore it may contain any edge open packing set of \(G_r\). Conversely, any optimum set from \(P_3(G_l)\) can be combined with any maximum edge open packing set of \(G_r\), because no selected edge of the left part is incident with \(TS(G_l)\). Thus \(\rho_3(G)=\rho_3(G_l)+\rho_e^o(G_r)\).
	
	Now fix an index \(i\). A set counted by \(P_4(i,G)\) is witnessed by an independent set \(A\subseteq TS(G)=TS(G_l)\) of size \(i\). Since every vertex of \(A\) is adjacent to every vertex of \(TS(G_r)\), the right side must avoid \(TS(G_r)\). Hence the left side contributes \(\rho_4(i,G_l)\), and the right side contributes \(\rho_3(G_r)\). The reverse construction is obtained by combining optimum sets of these two types. Therefore \(\rho_4(i,G)=\rho_4(i,G_l)+\rho_3(G_r)\), and taking the maximum over \(i\) gives \(\rho_4(G)=\max_i\rho_4(i,G)\).
	
	Finally, fix an index \(j\). A set counted by \(P_5(j,G)\) is witnessed by a vertex \(x\in TS(G)=TS(G_l)\) and an independent set \(B_x\subseteq N_{G-TS(G)}(x)\) of size \(j\). Since \(x\) is adjacent to all vertices of \(TS(G_r)\), the set \(B_x\) may either avoid \(TS(G_r)\) or use some vertices of \(TS(G_r)\). If \(B_x\cap TS(G_r)=\emptyset\), then the left side contributes \(\rho_5(j,G_l)\), while the right side must avoid \(TS(G_r)\), giving \(\rho_5(j,G_l)+\rho_3(G_r)\). If \(B_x\cap TS(G_r)\neq\emptyset\), write \(B_x=B_x^1\cup B_x^2\), where \(B_x^1\subseteq V(G_l)\setminus TS(G_l)\), \(B_x^2\subseteq TS(G_r)\), \(|B_x^1|=j_1\), \(|B_x^2|=j_2\), \(j=j_1+j_2\), and \(j_2\neq 0\). Then the left side satisfies a \(P_5(j_1,G_l)\)-condition, while the right side satisfies a \(P_4(j_2,G_r)\)-condition. This gives the term \(\rho_5(j_1,G_l)+\rho_4(j_2,G_r)\). Conversely, any optimum sets of these corresponding types can be combined, since their avoidance conditions ensure that no selected edge is incident with the forbidden neighbourhoods. Therefore \(\rho_5(j,G)=\max\{\rho_5(j,G_l)+\rho_3(G_r),\rho_5(j_1,G_l)+\rho_4(j_2,G_r):j=j_1+j_2\text{ and }j_2\neq 0\}\), and taking the maximum over \(j\) gives \(\rho_5(G)=\max_j\rho_5(j,G)\).
\end{proof}

 \subsection{The algorithm}
 
 The algorithm processes the decomposition tree \(T\) bottom-up. For each node
 \(v\), it computes the values \(\rho_1(G_v)\), \(\rho_2(G_v)\),
 \(\rho_3(G_v)\), and \(\rho_e^o(G_v)\). It also stores the arrays
 \(\rho_4(i,G_v)\) and \(\rho_5(j,G_v)\) for all relevant indices
 \(0\le i,j\le |V(G_v)|\). Invalid entries are stored as \(-\infty\). The
 values \(\rho_4(G_v)\), \(\rho_5(G_v)\), and the largest indices attaining
 these maxima are maintained together with the arrays.
 
 At a leaf node \(v\), the graph \(G_v\) consists of one vertex and has no edge.
 Thus the empty set is the only edge open packing set. The table is initialized
 directly: \(\rho_3(G_v)=0\), \(\rho_e^o(G_v)=0\), and all families requiring a
 selected edge receive value \(-\infty\). The relevant zero-size avoidance
 entries of the auxiliary arrays are initialized consistently.
 
 At an internal node \(v\), let \(v_l\) and \(v_r\) be its children, and let
 \(G_l=G_{v_l}\) and \(G_r=G_{v_r}\). If \(v\) is labelled by \(\otimes\), then
 the table of \(G_v\) is computed using Lemma~\ref{dh_lemma_1}. If \(v\) is
 labelled by \(\odot\), then the table is computed using
 Lemma~\ref{dh_lemma_2}. If \(v\) is labelled by \(\oplus\), then the table is
 computed using Lemma~\ref{dh_lemma_3}. After all entries have been computed, we
 set \(\rho_e^o(G_v)=\max\{\rho_1(G_v),\rho_2(G_v),\rho_3(G_v)\}\), as justified
 by Observation~\ref{Obs: dh_graphs_1}.
 
 When the root \(r\) is processed, \(G_r=G\). Therefore the value stored as
 \(\rho_e^o(G_r)\) is the maximum edge open packing number of \(G\). By storing
 predecessor choices for the maximum values used in the recurrences, an actual
 maximum edge open packing set can also be recovered by backtracking from the
 root.
 
 \begin{theorem}\label{thm:dh-eop-correct}
 	The dynamic programming algorithm correctly computes \(\rho_e^o(G)\) for every
 	distance-hereditary graph \(G\).
 \end{theorem}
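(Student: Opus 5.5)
The proof is by structural induction on the decomposition tree \(T\) of Lemma~\ref{lem:dh-decomposition-tree}, processed in post-order. The invariant for a node \(v\) is that, once \(v\) is processed, the stored values \(\rho_1(G_v),\rho_2(G_v),\rho_3(G_v),\rho_e^o(G_v)\) and the arrays \(\rho_4(\cdot,G_v)\), \(\rho_5(\cdot,G_v)\) equal the true optima over the families \(P_1(G_v),\dots,P_5(\cdot,G_v)\). Entries whose family is empty are \(-\infty\). The indices \(p_v,q_v\) are the largest maximizers, as defined before Observation~\ref{Obs: dh_graphs_1}. Since \(G_r=G\) at the root, the invariant at the root gives the theorem.

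\textbf{Base case.} For a leaf \(v\), the graph \(G_v\) is a single vertex, has no edges, and has \(TS(G_v)=\{v\}\). The only EOP set is \(\emptyset\), and it lies in \(P_3(G_v)\). Hence \(\rho_3=\rho_e^o=0\) and \(\rho_1=\rho_2=-\infty\). The independent set \(\{v\}\) witnesses \(\rho_4(1,G_v)=0\). The empty \(B_v\) witnesses \(\rho_5(0,G_v)=0\). These are exactly the initialization values, so the invariant holds at leaves.

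\textbf{Inductive step.} Let \(v\) be an internal node with children \(v_l,v_r\), and assume the invariant holds for both children. Every entry of the table of \(G_v\) is computed by Lemma~\ref{dh_lemma_1}, \ref{dh_lemma_2} or \ref{dh_lemma_3}, depending on the label \(\otimes\), \(\odot\) or \(\oplus\). Each right-hand side involves only quantities of \(G_l\) and \(G_r\): their \(\rho_1,\rho_2,\rho_3,\rho_e^o\), the arrays \(\rho_4,\rho_5\), and the indices \(p_l,p_r,q_l,q_r\). All of these are correct by the hypothesis, so every new entry is correct. Two points need checking here.
\begin{itemize}
\item The \(-\infty\) convention must be compatible with the recurrences: a sum containing an infeasible term stays \(-\infty\), so infeasible combinations never win a maximum.
\item The maxima defining \(\rho_4(G_v)\), \(\rho_5(G_v)\), \(p_v\) and \(q_v\) must be recomputed from the new arrays after the arrays are filled, so that the parent sees the correct indices.
\end{itemize}
Finally, \(\rho_e^o(G_v)=\max\{\rho_1,\rho_2,\rho_3\}\) is correct by Observation~\ref{Obs: dh_graphs_1}. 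That observation only uses the fact that the three families partition all EOP sets, so it applies to every \(G_v\) even though it is stated for connected graphs. This matters because \(\odot\)-nodes may produce disconnected \(G_v\).

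The main obstacle is that the induction is only as strong as the three recurrence lemmas. In particular, the terms of the form \(\rho_4+\rho_5+p+q\) assume that the star of cross edges can be combined with optimal avoidance packings on both sides simultaneously. The theorem's proof will take those lemmas as given and verify only the bookkeeping described above. This includes that the index ranges \(0\le i,j\le |V(G_v)|\) cover every index that can appear, and that the split sums \(i=i_1+i_2\) and \(j=j_1+j_2\) range over indices that are all stored. If an extra check of the lemmas is wanted, I would first test the \(\otimes\) case on a small graph, such as \(K_2\otimes K_1\), to confirm that the base-case conventions for \(\rho_4\) and \(\rho_5\) produce the correct value.
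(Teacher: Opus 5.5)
Your induction has the same shape as the paper's proof: a leaf base case, Lemmas~\ref{dh_lemma_1}--\ref{dh_lemma_3} at internal nodes, and the maximum of $\rho_1,\rho_2,\rho_3$ at the root. The bookkeeping you add is harmless. But the inductive step needs the recurrences to return exact optima from exact child tables, and the spot you flagged as the ``main obstacle'' is exactly where this fails. The cross-star terms $\rho_4(G_l)+\rho_5(G_r)+p_l+q_r$ in Lemma~\ref{dh_lemma_1}(i), and the same terms in Lemma~\ref{dh_lemma_3}(ii), are wrong. The issue is not combinability: the $P_3$/$P_4$/$P_5$ avoidance conditions do make a cross star compatible with the side packings. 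The issue is the optimisation. The best packing whose cross edges form a star centred in $TS(G_r)$ has size $\max_i\bigl(\rho_4(i,G_l)+i\bigr)+\max_j\bigl(\rho_5(j,G_r)+j\bigr)$. Since $P_4(i,G)\subseteq P_4(i',G)$ for $1\le i'\le i$, the array $\rho_4(\cdot,G_l)$ is nonincreasing. Hence $\rho_4(G_l)=\rho_4(1,G_l)$, and $p_l$ is only the largest index at that top value. A larger star paired with a slightly smaller side packing is never considered.

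Here is a concrete counterexample. Let $G_l=a_1\odot\bigl((a_2\odot a_3)\oplus(u\oplus(w_1\oplus w_2))\bigr)$, so $E(G_l)=\{a_2u,a_3u,uw_1,w_1w_2\}$ and $TS(G_l)=\{a_1,a_2,a_3\}$. Let $G_r$ be the single vertex $c$ and $G=G_l\otimes G_r$. The exact child values are:
\begin{itemize}
\item $\rho_1(G_l)=\rho_1(G_r)=-\infty$;
\item $\rho_4(1,G_l)=2$, witnessed by $A=\{a_1\}$ and the star $\{uw_1,w_1w_2\}$;
\item $\rho_4(2,G_l)=\rho_4(3,G_l)=1$, because any such $A$ contains $a_2$ or $a_3$ and so blocks $u$; hence $p_l=1$;
\item $\rho_5(G_l)=2$ with $q_l=0$;
\item $\rho_4(G_r)=0$, $p_r=1$, and $\rho_5(G_r)=q_r=0$.
\end{itemize}
Lemma~\ref{dh_lemma_1}(i) then returns $\max\{-\infty,-\infty,2+0+1+0,2+0+1+0\}=3$. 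Yet $\{ca_1,ca_2,ca_3,w_1w_2\}$ is an edge open packing of $G$ using edges of $G[TS(G)]$, so $\rho_1(G)\ge 4$. Running the whole DP on this decomposition tree gives $\rho_2(G)=3$ and $\rho_3(G)=2$; all lower nodes come out exact. So the algorithm outputs $3$, while $\rho_e^o(G)=4$.

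So the induction cannot be closed by citing the lemmas. The paper's proof has the same dependence and the same hole. Your $K_2\otimes K_1$ sanity check is too small to expose it, because the arrays there are constant and the choice of $p,q$ loses nothing. The cross-star terms must be replaced by the form $\max_i(\rho_4(i,\cdot)+i)+\max_j(\rho_5(j,\cdot)+j)$, and the lemmas re-proved, before your bookkeeping induction establishes anything.
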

 
 \begin{proof}
 	We prove the claim by induction over the rooted decomposition tree. For a leaf
 	node, the claim is immediate because the corresponding graph has one vertex and
 	no edge. Now let \(v\) be an internal node with children \(v_l\) and \(v_r\),
 	and assume that the tables at \(v_l\) and \(v_r\) are correct. If \(v\) is
 	labelled by \(\otimes\), then Lemma~\ref{dh_lemma_1} gives exactly the optimum
 	values for all required families in \(G_v=G_l\otimes G_r\). If \(v\) is
 	labelled by \(\odot\), the same follows from Lemma~\ref{dh_lemma_2}. If \(v\)
 	is labelled by \(\oplus\), it follows from Lemma~\ref{dh_lemma_3}. Thus the
 	table at \(v\) is correct in all cases. By induction, the table at the root is
 	correct. Finally, Observation~\ref{Obs: dh_graphs_1} gives
 	\(\rho_e^o(G)=\max\{\rho_1(G),\rho_2(G),\rho_3(G)\}\). Hence the algorithm
 	correctly computes \(\rho_e^o(G)\).
 \end{proof}
 
 \subsection{Running time analysis}
 
 Let \(n=|V(G)|\). Since \(m\le n^2\), the decomposition tree \(T\) can be
 constructed in \(O(n^2)\) time by Lemma~\ref{lem:dh-decomposition-tree}. The
 tree has \(O(n)\) nodes. For a node \(v\), let \(n_v=|V(G_v)|\). The table at
 \(v\) stores a constant number of scalar values and two arrays of length at
 most \(n_v+1\), namely the arrays for \(\rho_4(i,G_v)\) and
 \(\rho_5(j,G_v)\). Hence, the table size at \(v\) is \(O(n_v)\).
 
 A leaf node is processed in constant time. At a true twin node, all scalar
 values are obtained from a constant number of expressions, and the arrays
 \(\rho_4(i,G_v)\) and \(\rho_5(j,G_v)\) are computed by scanning their indices.
 Thus a true twin node is processed in \(O(n_v)\) time. At a false twin node,
 the only convolution-type computation is the split \(i=i_1+i_2\) in
 Lemma~\ref{dh_lemma_2}. Computing all values of \(\rho_4(i,G_v)\) therefore
 takes \(O(n_v^2)\) time, while all other entries take \(O(n_v)\) time. Hence,  a
 false twin node is processed in \(O(n_v^2)\) time. At an attachment node, the
 only convolution-type computation is the split \(j=j_1+j_2\) in
 Lemma~\ref{dh_lemma_3}. Computing all values of \(\rho_5(j,G_v)\) therefore
 takes \(O(n_v^2)\) time, while all other entries take \(O(n_v)\) time. Hence, an
 attachment node is also processed in \(O(n_v^2)\) time.
 
 Therefore, every node \(v\) is processed in \(O(n_v^2)\) time. Since
 \(n_v\le n\) for every node and \(T\) has \(O(n)\) nodes, the total dynamic
 programming time is \(O(\sum_{v\in V(T)}n_v^2)\le O(n^3)\). Including the
 construction of the decomposition tree, the total running time remains
 \(O(n^3)\).
 
  If only the optimum value is required, child tables can
 be discarded after their parent has been processed. If a maximum edge open
 packing set is required, predecessor pointers may be stored without changing
 the polynomial bound.
 
 \begin{theorem}\label{thm:dist-here}
 	The \textsc{Maximum Edge Open Packing} problem can be solved in \(O(n^3)\) time
 	on distance-hereditary graphs.
 \end{theorem}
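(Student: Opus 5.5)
The theorem follows by combining Theorem~\ref{thm:dh-eop-correct} with the running-time analysis just given.

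\textbf{Correctness.} Given a distance-hereditary graph \(G\), we first build a decomposition tree \(T\) using Lemma~\ref{lem:dh-decomposition-tree}. We then run the bottom-up dynamic program. At each internal node we use Lemma~\ref{dh_lemma_1}, Lemma~\ref{dh_lemma_2}, or Lemma~\ref{dh_lemma_3}, according to the node's label. At the root we output \(\max\{\rho_1,\rho_2,\rho_3\}\), which is justified by Observation~\ref{Obs: dh_graphs_1}. Theorem~\ref{thm:dh-eop-correct} guarantees that this value equals \(\rho_e^o(G)\). If \(G\) is disconnected, we either rely on the \(\odot\) operation, which already handles disjoint unions, or solve each component separately and sum the results.

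\textbf{Running time.} The tree is constructed in \(O(n+m)\subseteq O(n^2)\) time. The tree \(T\) is a full binary tree with \(n\) leaves, so it has \(2n-1\) nodes.

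For a node \(v\), the table has \(O(n_v)\) entries, where \(n_v=|V(G_v)|\). Leaf nodes cost \(O(1)\), and true-twin nodes cost \(O(n_v)\). The only superlinear work occurs at false-twin and attachment nodes. There, computing \(\rho_4(i,G_v)\) (respectively \(\rho_5(j,G_v)\)) for every index requires a max-plus convolution over splits \(i=i_1+i_2\), which costs \(O(n_v^2)\) per node.

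The auxiliary quantities \(\rho_4(G_v)\), \(\rho_5(G_v)\), \(p\), and \(q\) are obtained by a linear scan of the computed arrays. Summing over all nodes gives \(\sum_v O(n_v^2)\le O(n)\cdot O(n^2)=O(n^3)\).

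A sharper bound follows from the standard fact that \(\sum_v |L(v_l)|\cdot|L(v_r)|\le\binom n2\). This holds because each pair of leaves meets at exactly one lowest common ancestor. Truncating each array at the corresponding subtree size makes each convolution cost \(O(|L(v_l)|\cdot|L(v_r)|)\), which gives \(O(n^2)\). For the stated theorem, however, the cruder \(O(n^3)\) bound is enough.

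\textbf{Main obstacle.} The only point needing care is to confirm that the array indices are bounded by \(n_v\). The index \(i\) is at most \(|TS(G_v)|\le n_v\), and \(j\) is at most \(|N(x)|\le n_v\). With these bounds, each recurrence is evaluated within the claimed per-node cost, and backtracking pointers add only constant overhead per entry.
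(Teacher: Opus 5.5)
Your proposal is correct and follows the paper's proof. Correctness comes from Theorem~\ref{thm:dh-eop-correct} applied to the decomposition tree, and the time bound from the \(O(n_v^2)\) per-node cost of the convolution-type recurrences at false-twin and attachment nodes, summed over the \(2n-1\) nodes of \(T\). Your extra lowest-common-ancestor counting argument, which gives \(O(n^2)\) for the dynamic program, is a valid sharpening the paper does not make, but it is not needed for the stated \(O(n^3)\) bound.
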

 
 \begin{proof}
 	The correctness follows from Theorem~\ref{thm:dh-eop-correct}. The running-time
 	analysis above shows that the decomposition tree can be constructed within
 	\(O(n^3)\) time and that the dynamic program over the decomposition tree takes
 	\(O(n^3)\) time. Hence the total running time is \(O(n^3)\).
 \end{proof}

\section{Biconvex bipartite graphs}\label{EOP_Sec_3}

\begin{theorem}\label{thm:biconvex}
    \maxeop~is polynomial time solvable in biconvex bipartite graphs. 
\end{theorem}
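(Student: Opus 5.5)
We plan a dynamic program along the level structure of Definition~\ref{def:biconv}, treating each connected component separately and summing the results. First we fix the structural meaning of an EOP set in a bipartite graph. Two edges $e_1,e_2$ of a bipartite graph conflict exactly when some edge joins an endpoint of $e_1$ to an endpoint of $e_2$, and there is no triangle. So an EOP set $S$ is a disjoint union of stars, each induced by $S$ in the edge sense, such that no edge of $G$ joins two different stars and no edge of $G$ joins two leaves of the same star. In a bipartite graph the leaves of a star lie in one part, so that second condition is automatic. Hence $S$ is a collection of stars whose vertex sets are pairwise non-adjacent in $G$.

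Now use the layering $L_0,\dots,L_p$. Every edge lies between consecutive layers $L_{i-1},L_i$, inside the chain graph $G_i$. A star of $S$ has its centre in some $L_i$ and its leaves in $L_{i-1}\cup L_i\dots$; more precisely, the leaves lie in $L_{i-1}\cup L_{i+1}$. The key lemma we would prove is an exchange argument that uses the chain orderings. For a star centred at $c\in L_i$, we may replace $c$ by the vertex of $L_i$ that is smallest in the relevant chain order among those adjacent to all chosen leaves. Because the neighbourhoods in $G_i$ and $G_{i+1}$ are nested, this replacement keeps the star valid and does not enlarge the set of vertices adjacent to the star. The same reasoning applies to the leaves: within a layer, the set of vertices of $L_{i\pm1}$ that are "blocked" (adjacent to a used vertex) is a prefix or suffix with respect to $\sigma_{i\pm1,1}$ or $\sigma_{i\pm1,2}$. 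This needs the non-increasing and non-decreasing conditions of Definition~\ref{def:biconv}, and we would verify it carefully in the mixed case where one layer carries both the $G_i$ order and the $G_{i+1}$ order.

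The DP then sweeps $i=0,1,\dots,p$. The state at layer $i$ records a constant amount of boundary information: the position in $\sigma_{i,1}$ and $\sigma_{i,2}$ of the first vertex of $L_i$ that is still usable, which is a threshold index, and whether a star centred in $L_{i-1}$ still "reaches into" $L_i$, together with its threshold. Since blocked sets are monotone segments, each state consists of $O(1)$ indices in $[0,|L_i|]$. Transitions choose which stars have centres in $L_i$. By the exchange lemma, the stars with centres in a fixed layer whose leaves lie on the same side can be taken to occupy consecutive, non-interfering segments. Within one chain graph $G_i$, the maximum packing of such stars subject to the threshold constraints can itself be computed greedily or by a simple interval DP, since a chain graph's neighbourhoods form a linear order. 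This gives polynomially many states and polynomial-time transitions, for roughly $O(n^3)$ or $O(n^4)$ overall.

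The main obstacle is the exchange lemma in the middle layers. A vertex of $L_i$ used as a leaf or centre blocks neighbours in both $L_{i-1}$ and $L_{i+1}$, and the two orderings $\sigma_{i,1}$ and $\sigma_{i,2}$ may differ. It is therefore unclear that the used vertices of $L_i$ can be summarised by thresholds in both orders simultaneously. My first attempt would be to show that at most one "interaction" per layer matters: any vertex of $L_i$ used by $S$ blocks a prefix of $L_{i-1}$ and a prefix of $L_{i+1}$, so only the union of prefixes counts, which is the maximum threshold in each order. The state would then store two independent indices, one per adjacent chain graph. This gives a state space of size $O(|L_i|^2)$, which is still polynomial.
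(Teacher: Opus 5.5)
There is a genuine gap. It sits at the point you flag as the main obstacle, and the exchange lemma is not merely unverified there: it is false. On a middle layer $L_i$ the orders $\sigma_{i,1}$ and $\sigma_{i,2}$ are unrelated. Moving the centre to a vertex with a smaller neighbourhood in $L_{i+1}$ can therefore enlarge its neighbourhood in $L_{i-1}$, even when all leaves lie on one side.

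For a concrete instance, take $L_0=\{z\}$, $L_1=\{x_1,x_2\}$, $L_2=\{c,c'\}$, $L_3=\{y_1,y_2\}$ with edges $zx_1,zx_2,x_1c,x_1c',x_2c',cy_1,cy_2,c'y_1$. This is biconvex with exactly this layering. The set $\{zx_2,cy_1\}$ is an edge open packing. Among the vertices of $L_2$ adjacent to the leaf $y_1$, the one with the smallest neighbourhood in $L_3$ is $c'$. Yet $\{zx_2,c'y_1\}$ is not an edge open packing, because of the edge $x_2c'$. So centres cannot be normalised away; they must be guessed, which is why the paper's configurations $\Gamma$ store $w_1$ and $w_{2,\delta}$ explicitly.

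Your blocking summary has a related defect. A centre is adjacent to all its own leaves, so blocking must exempt the vertex's own star. A state made only of thresholds cannot tell whether the star crossing $G_i$ may continue into $G_{i+1}$ through a centre in $L_i$.

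Beyond this, the threshold-summary lemma is left open, and the transition is never specified (``maximum packing of such stars \dots\ greedily or by a simple interval DP''). The missing idea is Observation~\ref{obs:parallel}. Each $G_i$ is a chain graph, hence $2K_2$-free, so any two solution edges inside $G_i$ share a vertex. Thus $S\cap E(G_i)$ is a single, possibly empty, star, and there is nothing to pack inside one chain graph.

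Each step therefore chooses one star, namely a centre together with the leaf whose neighbourhood in the next layer is largest. By the argument behind Lemma~\ref{lem:structural-lemma}, you may then take all admissible leaves whose neighbourhood in that layer is contained in this extreme leaf's.

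With these two facts your threshold idea can be completed. A sufficient state records two things:
\begin{enumerate}
\item the prefix of $L_i$ in $\sigma_{i,1}$ blocked by the used vertices of $L_{i-1}$;
\item either the centre $w\in L_i$ of the star in $G_i$, or the prefix of $L_{i+1}$ in $\sigma_{i+1,1}$ blocked by that star's vertices in $L_i$.
\end{enumerate}
This gives polynomially many states and transitions. As written, however, your proposal establishes neither the structural lemma nor the recurrence.
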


We use ``multi-chain ordering'' of biconvex bipartite graphs to obtain an algorithm for \maxeop. Let \( G = (V,E) \) be a biconvex bipartite graph with bipartition \( (X,Y) \). Biconvex bipartite graphs admit multi-chain ordering \cite{Diaz2021} and such an ordering can be constructed in polynomial time.  Let $L_0, L_1, \dots, L_p$ be the layers of a multi-chain ordering of $G$, where
\( L_{2i} \subseteq X \), \( L_{2i+1} \subseteq Y \) for $i=0, 1, 2, \ldots $.
This layered structure imposes a strong ordering on the edges of \( G \) and significantly restricts the possible conflicts
between edges in an edge open packing set. 
Exploiting these properties, we show that the selection of edges from each pair of consecutive layers
can be performed independently up to local consistency constraints, which leads to a polynomial-time dynamic programming algorithm for \maxeop.

We first look at some structural results before proceeding with the algorithm.

\begin{observation}\label{obs:parallel}
There cannot be two \emph{parallel solution edges} between two consecutive layers of a multi-chain ordering.
\end{observation}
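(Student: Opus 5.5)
The plan is to reduce Observation~\ref{obs:parallel} to the nested-neighbourhood property of chain graphs guaranteed by Definition~\ref{def:biconv}.

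First I fix the meaning of the term. Two \emph{parallel solution edges} between consecutive layers $L_{i-1}$ and $L_i$ are two edges $e_1=x_1y_1$ and $e_2=x_2y_2$ of an edge open packing set $S$ with the following properties:
\begin{itemize}
\item $x_1,x_2\in L_{i-1}$ and $y_1,y_2\in L_i$;
\item $e_1$ and $e_2$ share no endpoint, so $x_1\neq x_2$ and $y_1\neq y_2$.
\end{itemize}
Edges that share an endpoint are excluded from this notion, since they may legitimately lie in a common star of $S$. I will argue by contradiction, assuming such $e_1,e_2\in S$ exist.

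By Definition~\ref{def:biconv}(1), $G[L_{i-1}\cup L_i]=G_i$ is a chain graph. Its chain ordering of $L_{i-1}$ gives nested neighbourhoods inside $G_i$, so $N_{G_i}(x_1)$ and $N_{G_i}(x_2)$ are comparable. Without loss of generality, $N_{G_i}(x_1)\subseteq N_{G_i}(x_2)$. Since $y_1\in N_{G_i}(x_1)$, we get $y_1\in N_{G_i}(x_2)$, so $x_2y_1$ is an edge of $G$.

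The edge $x_2y_1$ is distinct from both $e_1$ and $e_2$, because $x_2\neq x_1$ and $y_1\neq y_2$. It joins the endpoint $y_1$ of $e_1$ to the endpoint $x_2$ of $e_2$. This is exactly the forbidden conflict in the definition of an edge open packing: the edges $x_1y_1$, $y_1x_2$, $x_2y_2$ form a walk (indeed a $P_4$). This contradicts $e_1,e_2\in S$.

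The only step needing care is the comparability of neighbourhoods. I take it restricted to the chain graph $G_i$ rather than to all of $G$, since the vertices $x_j$ may also have neighbours in $L_{i-2}$. Comparability within $G_i$ is precisely what the chain ordering in Definition~\ref{def:biconv} provides, and it is all the argument uses. I expect no real obstacle beyond stating the definition of ``parallel'' precisely.
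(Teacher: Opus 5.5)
Your proof is correct and matches the paper's argument: the nested neighbourhoods in the chain graph between consecutive layers force the cross edge $x_2y_1$, which joins an endpoint of $e_1$ to an endpoint of $e_2$ and thus creates a common edge. The paper's proof uses vertex-disjointness of parallel edges (so that $x_2y_1$ is a genuinely third edge) only implicitly; your explicit requirement is a small improvement in precision.
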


\begin{proof}
Let \( G = (V,E) \) be a biconvex bipartite graph, let
\( L_0, L_1, \dots, L_p \) be the layers of a multi-chain ordering of \( G \),
and let \( D \subseteq E \) be an edge open packing set of \( G \).

Suppose, for the sake of contradiction, that \( D \) contains two parallel solution edges
\( e_1 = u_1v_1 \) and \( e_2 = u_2v_2 \), where
\( u_1, u_2 \in L_i \) and \( v_1, v_2 \in L_{i+1} \).
By the definition of a multi-chain ordering, the neighborhoods of vertices in \( L_i \)
into \( L_{i+1} \) are nested under set inclusion.
Without loss of generality, assume that
\( N(u_1) \cap L_{i+1} \subseteq N(u_2) \cap L_{i+1} \).

Since \( v_1 \in N(u_1) \cap L_{i+1} \), it follows that \( v_1 \in N(u_2) \cap L_{i+1} \),
and hence the edge \( u_2v_1 \in E \).
The edge \( u_2v_1 \) is distinct from both \( e_1 \) and \( e_2 \),
and it is incident to an endpoint of \( e_1 \) (namely \( v_1 \))
and to an endpoint of \( e_2 \) (namely \( u_2 \)). Thus, \( e_1 \) and \( e_2 \) have a common edge, contradicting the assumption that
\( D \) is an edge open packing set.
Therefore, no edge open packing set can contain two parallel solution edges between two consecutive layers. 
\end{proof}

Given a linear ordering $\sigma$ on a vertex set $U$ and two vertices $a,b \in U$
with $a \preceq_\sigma b$, we define the interval
$[a,b]_\sigma := \{\, v \in U \mid a \preceq_\sigma v \preceq_\sigma b \,\}$. 
If $b \prec_\sigma a$, then $[a,b]_\sigma := \emptyset$.

\begin{lemma}\label{lem:structural-lemma}
Let \( w \in L_i \) be a vertex such that the vertices of a solution induce a star
on a subset of vertices in \( L_i \cup L_{i+1} \), denoted by \( S_w \), with center vertex \( w \).
Let \( \sigma_{i+1,1} \) be the ordering of \( L_{i+1} \) decreasing with respect to \( L_i \),
and let \( \sigma_{i+1,2} \) be the ordering of \( L_{i+1} \) increasing with respect to \( L_{i+2} \).

Assume that \( S_w \cap L_{i+1} \neq \emptyset \).
Let \( v_f, v_{\ell} \) be the first and last vertices of \( L_{i+1} \) in the ordering
\( \sigma_{i+1,1} \) that belong to \( S_w \),
and let \( u_f, u_{\ell} \) be the first and last vertices of \( L_{i+1} \) in the ordering
\( \sigma_{i+1,2} \) that belong to \( S_w \). Then
\[
S_w \cap L_{i+1}
=
\{\, v \in L_{i+1} \mid
v \in [v_f,v_{\ell}]_{\sigma_{i+1,1}}
\;\cap\;
[u_f,u_{\ell}]_{\sigma_{i+1,2}}
\,\}.
\]
In particular, the leaves of the star \( S_w \) in layer \( L_{i+1} \) form a contiguous block
in both orderings \( \sigma_{i+1,1} \) and \( \sigma_{i+1,2} \).
\end{lemma}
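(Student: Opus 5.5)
The inclusion ``$\subseteq$'' is immediate. Every $v\in S_w\cap L_{i+1}$ lies between the first and last members of $S_w$ in $\sigma_{i+1,1}$ and also in $\sigma_{i+1,2}$, so $v\in[v_f,v_\ell]_{\sigma_{i+1,1}}\cap[u_f,u_\ell]_{\sigma_{i+1,2}}$. The real content is the reverse inclusion. That inclusion can only hold when the star is taken to be inclusion-maximal among valid packings: the solution is maximum, or equivalently we may assume without loss of generality that no further edge at $w$ can be added. I will make this reading explicit and show that any $v$ in the intersection can be added as a leaf of $S_w$ without violating the edge open packing property. Maximality then forces $v\in S_w$.

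Fix $v\in[v_f,v_\ell]_{\sigma_{i+1,1}}\cap[u_f,u_\ell]_{\sigma_{i+1,2}}$. The first step is to show $wv\in E$. Since $\sigma_{i+1,1}$ is a chain ordering that is non-increasing with respect to $L_i$ and $v_f\preceq v\preceq v_\ell$, we have
\[
N(v_\ell)\cap L_i\subseteq N(v)\cap L_i\subseteq N(v_f)\cap L_i .
\]
Because $w\in N(v_\ell)$, it follows that $w\in N(v)$. The second step is to bound the neighbourhood of $v$ on both sides. On the $L_i$ side we already have $N(v)\cap L_i\subseteq N(v_f)\cap L_i$. On the $L_{i+2}$ side, $\sigma_{i+1,2}$ is non-decreasing with respect to $L_{i+2}$, so $u_f\preceq v\preceq u_\ell$ gives $N(v)\cap L_{i+2}\subseteq N(u_\ell)\cap L_{i+2}$. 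Since non-consecutive layers are non-adjacent, these two inclusions describe all of $N(v)$.

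The third step is to check that $wv$ conflicts with no solution edge $e=ab$. There are three cases.
\begin{itemize}
\item If $e\in S_w$, then $e$ and $wv$ share the centre $w$. They are not joined by a third edge, because two leaves lie in the same independent layer and $w$ is already adjacent to both leaves.
\item If $v$ is itself an endpoint of $e\notin S_w$, then the edge $wv$ joins an endpoint of a star edge to an endpoint of $e$. This conflict already exists in the given solution, so the case is impossible.
\item Otherwise, a conflict would be an edge from $v$ (respectively $w$) to an endpoint of $e$. An edge from $w$ is excluded because $w$ already belongs to the packing. An edge from $v$ goes to some $x\in N(v)\cap L_i$ or $x\in N(v)\cap L_{i+2}$. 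By the second step, $x$ is then also adjacent to $v_f$ or to $u_\ell$, both of which are leaves of $S_w$. So the same edge conflict already occurs between $e$ and $wv_f$ or $wu_\ell$, contradicting that the solution is an edge open packing.
\end{itemize}

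Hence $S\cup\{wv\}$ is still an edge open packing, and maximality gives $v\in S_w$. The ``in particular'' clause follows directly: the set is an intersection of two intervals, and each interval is contiguous in its own ordering.

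The main obstacle is the third case of the conflict analysis, together with fixing the correct maximality assumption. The transfer argument must use the correct direction of nestedness in each ordering: the first leaf $v_f$ in $\sigma_{i+1,1}$ for the $L_i$ side, and the last leaf $u_\ell$ in $\sigma_{i+1,2}$ for the $L_{i+2}$ side. It must also rule out solution edges lying in $L_{i+1}\cup L_{i+2}$ that are incident with a neighbour of $v$; that is exactly the situation the bound $N(v)\cap L_{i+2}\subseteq N(u_\ell)\cap L_{i+2}$ handles.
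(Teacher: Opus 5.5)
Your proof is correct and follows the same route as the paper: adjacency of $v$ to $w$ from the chain ordering $\sigma_{i+1,1}$, then transfer of any conflict of $wv$ to an existing star edge through the nested neighbourhoods, then maximality of the solution. It is also more careful than the paper's version in two places. You derive $wv\in E$ from $N(v_\ell)\cap L_i\subseteq N(v)\cap L_i$, whereas the paper's Claim~1 argues from the inclusion for $v_f$, which goes the wrong way. You also treat conflicts through $N(v)\cap L_i$ (via $v_f$) and the case that $v$ is already covered by another solution edge, both of which the paper's argument omits.

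As in the paper, the ``in particular'' clause is justified only in the weak sense that the leaf set is an intersection of an interval of $\sigma_{i+1,1}$ with an interval of $\sigma_{i+1,2}$. Literal contiguity in each ordering separately can fail for a maximum solution, but the intersection description is all that the later dynamic program uses.
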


\begin{proof}
Let \( w \in L_i \) be the center of a solution star \( S_w \), and suppose
\( S_w \cap L_{i+1} \neq \emptyset \).
By definition of a solution star, every vertex in \( S_w \cap L_{i+1} \)
is adjacent to \( w \), and no other edges incident to these vertices
belong to the solution.

We first consider the ordering \( \sigma_{i+1,1} \), which orders
\( L_{i+1} \) decreasingly with respect to \( L_i \).
Let \( v_f \) and \( v_{\ell} \) be the first and last vertices of
\( L_{i+1} \) under \( \sigma_{i+1,1} \) that belong to \( S_w \).

\medskip
\noindent\emph{Claim 1.}
If \( v \in L_{i+1} \) satisfies
\( v_f \prec_{\sigma_{i+1,1}} v \prec_{\sigma_{i+1,1}} v_{\ell} \),
then \( v \) is adjacent to \( w \).

\smallskip
\noindent
Indeed, since \( v_f \prec_{\sigma_{i+1,1}} v \),
by the definition of the multi-chain ordering we have
$N(v_f) \cap L_i \supseteq N(v) \cap L_i$. 
Since \( w \in N(v_f) \cap L_i \), it follows that
\( w \in N(v) \cap L_i \), and hence \( vw \in E(G) \).
Now consider the ordering \( \sigma_{i+1,2} \), which orders
\( L_{i+1} \) increasingly with respect to \( L_{i+2} \).
Let \( u_f \) and \( u_{\ell} \) be the first and last vertices of
\( L_{i+1} \) under \( \sigma_{i+1,2} \) that belong to \( S_w \).

\medskip
\noindent\emph{Claim 2.}
If \( v \in L_{i+1} \) satisfies
\( u_f \prec_{\sigma_{i+1,2}} v \prec_{\sigma_{i+1,2}} u_{\ell} \),
then every neighbor of \( v \) in \( L_{i+2} \) is also a neighbor of
\( u_{\ell} \) in \( L_{i+2} \).

\smallskip
\noindent
This follows directly from the definition of \( \sigma_{i+1,2} \),
since for vertices ordered increasingly with respect to \( L_{i+2} \),
we have $N(u_{\ell}) \cap L_{i+2} \supseteq N(v) \cap L_{i+2}$.

We now show the equality stated in the lemma. 
We first show the subset containment. 
Let \( v \in S_w \cap L_{i+1} \).
By definition of \( v_f, v_{\ell} \) and \( u_f, u_{\ell} \),
we must have
$v_f \preceq_{\sigma_{i+1,1}} v \preceq_{\sigma_{i+1,1}} v_{\ell}
\quad\text{and}\quad
u_f \preceq_{\sigma_{i+1,2}} v \preceq_{\sigma_{i+1,2}} u_{\ell}$. Hence \( v \) belongs to the stated set.

Let \( v \in L_{i+1} \) satisfy
$v_f \preceq_{\sigma_{i+1,1}} v \preceq_{\sigma_{i+1,1}} v_{\ell}
\quad\text{and}\quad
u_f \preceq_{\sigma_{i+1,2}} v \preceq_{\sigma_{i+1,2}} u_{\ell}$. 
By Claim~1, the vertex \( v \) is adjacent to \( w \).
Suppose, for contradiction, that \( v \notin S_w \). Then adding the edge \( wv \) to the solution would preserve the star
structure centered at \( w \) within \( L_i \cup L_{i+1} \).
Moreover, by Claim~2, any potential conflict created by including \( v \)
with a solution edge incident to \( L_{i+2} \) would also arise from
including \( u_{\ell} \), which already belongs to \( S_w \).
Thus, the inclusion of \( v \) would not create a new common-edge conflict,
contradicting the maximality and validity of the solution. Therefore, \( v \in S_w \), and hence 
$S_w \cap L_{i+1}
=
\{\, v \in L_{i+1} \mid
v \in [v_f,v_{\ell}]_{\sigma_{i+1,1}}
\;\cap\;
[u_f,u_{\ell}]_{\sigma_{i+1,2}}
\,\}$. 
In particular, the leaves of the star \( S_w \) in layer \( L_{i+1} \)
form a contiguous block in both orderings
\( \sigma_{i+1,1} \) and \( \sigma_{i+1,2} \). 
\end{proof}

\subsection{Solution types and overview of the algorithm}
We now describe how a solution to \maxeop~can intersect with a layer $L_i$ in a multi-chain ordering of a biconvex bipartite graph. 
The intersection of a solution with $L_i$ can essentially take the form of a \emph{star}, denoted $S_w$, 
with  center vertex $w$ and the remaining vertices forming the leaves. 
We distinguish two types of stars:


 

\noindent
\underline{\textbf{Type~1: Star centered in \(L_i\).}}  
The center vertex \(w\) belongs to \(L_i\), and the leaves of the star belong to
one or both adjacent layers \(L_{i-1}\) and \(L_{i+1}\). That is,
$S_w = \{w\} \cup S^-_w \cup S^+_w,
\quad
S^-_w \subseteq L_{i-1},
\quad
S^+_w \subseteq L_{i+1}$. 
For each non-empty leaf set \(S^-_w\) (resp.\ \(S^+_w\)) there exist endpoints
\(a^-, b^-\) (resp.\ \(a^+, b^+\)) and \(c^-, d^-\) (resp.\ \(c^+, d^+\)) such that:
\begin{itemize}
    \item \(a^-\) and \(b^-\) are the first and last vertices of \(S^-_w\)
    in the ordering \(\sigma_{i-1,1}\);
    \item \(c^-\) and \(d^-\) are the first and last vertices of \(S^-_w\)
    in the ordering \(\sigma_{i-1,2}\);
    \item \(a^+\) and \(b^+\) are the first and last vertices of \(S^+_w\)
    in the ordering \(\sigma_{i+1,1}\);
    \item \(c^+\) and \(d^+\) are the first and last vertices of \(S^+_w\)
    in the ordering \(\sigma_{i+1,2}\).
\end{itemize}


Then every vertex \(v \in S^-_w\) satisfies
\[
a^- \preceq_{\sigma_{i-1,1}} v \preceq_{\sigma_{i-1,1}} b^-
\quad \text{and} \quad
c^- \preceq_{\sigma_{i-1,2}} v \preceq_{\sigma_{i-1,2}} d^-,
\]
that is, each vertex of \(S^-_w\) appears between \(a^-\) and \(b^-\) in the ordering \(\sigma_{i-1,1}\), and simultaneously between \(c^-\) and \(d^-\) in the ordering \(\sigma_{i-1,2}\). Consequently, the vertices of \(S^-_w\) form contiguous blocks in both orderings. Similarly, every vertex \(v \in S^+_w\) satisfies
\[
a^+ \preceq_{\sigma_{i+1,1}} v \preceq_{\sigma_{i+1,1}} b^+
\quad \text{and} \quad
c^+ \preceq_{\sigma_{i+1,2}} v \preceq_{\sigma_{i+1,2}} d^+,
\]
implying that the vertices of \(S^+_w\) also appear consecutively in both orderings \(\sigma_{i+1,1}\) and \(\sigma_{i+1,2}\). In particular, all leaves in \(S^-_w\) lie in \(L_{i-1}\) and are bounded by
endpoints in the ordering of \(L_{i-1}\), and all leaves in \(S^+_w\) lie in
\(L_{i+1}\) and are bounded by endpoints in the ordering of \(L_{i+1}\). 

\vspace{2mm}

\noindent
\underline{\textbf{Type~2: Star centered outside \(L_i\).}}  
The center vertex \(w\) belongs to an adjacent layer \(L_{i-1}\) or \(L_{i+1}\).
The star \(S_w\) may have leaves in several layers, but for the purpose of
the DP state at layer \(L_i\) we only record the leaves that belong to \(L_i\).

Formally, if \(S_w\) is a solution star with center \(w \in L_{i-1} \cup L_{i+1}\),
we denote by
$S_w^0 := S_w \cap L_i$ 
the set of leaves of \(S_w\) that lie in \(L_i\).
If \(S_w^0 \neq \emptyset\), then there exist endpoints \(a^0,b^0\) in
\(\sigma_{i,1}\) and \(c^0,d^0\) in \(\sigma_{i,2}\) such that every vertex
\(v \in S_w^0\) satisfies 
$a^0 \preceq_{\sigma_{i,1}} v \preceq_{\sigma_{i,1}} b^0
\text{ and }
c^0 \preceq_{\sigma_{i,2}} v \preceq_{\sigma_{i,2}} d^0$.

\medskip
\noindent\textbf{Overview of the algorithm.}
We solve \maxeop~by dynamic programming along a multi-chain ordering of a
biconvex bipartite graph.
Let us consider $L_0, L_1, \ldots, L_p$ be the layers of such an ordering.
The algorithm processes the layers sequentially, and at each step, considers a
window of three consecutive layers $L_{i-1}, L_i,$ and $L_{i+1}$.

A crucial structural property is that any edge open packing intersects a layer
$L_i$ only through stars of one of the two types described above.
Specifically, the solution either contains a \emph{Type~1} star centered at a
vertex of $L_i$ with its leaves in the adjacent layers $L_{i-1}$ and/or $L_{i+1}$,
or it contains a \emph{Type~2} star whose center lies in $L_{i-1}$ or $L_{i+1}$
and whose leaves lie in $L_i$.
By Observation~\ref{obs:parallel}, at most one star can use edges between any
two consecutive layers, and by Lemma~\ref{lem:structural-lemma}, the leaves of each
star in a given layer lie between two endpoints in each of the two relevant
orderings of that layer.

For each triple of consecutive layers $L_{i-1}, L_i,$ and $L_{i+1}$, we enumerate
all feasible local configurations describing how a solution may intersect $L_i$.
Each configuration specifies (i) the type of star involved (Type~1 or Type~2),
(ii) the location of the center vertex, and (iii) the first and last leaf vertices
in the relevant layer orderings. These endpoints determine the exact set of
vertices in the intersection of the two intervals induced by the orderings,
which in turn uniquely determines the set of edges selected between
$L_{i-1}$ and $L_i$, and between $L_i$ and $L_{i+1}$. The dynamic programming table stores, for each layer $L_i$, all configurations
that can be extended to a valid solution on the subgraph induced by layers
$L_0, \ldots, L_i$.
A transition between configurations at layers $L_{i-1}$ and $L_i$ is allowed if
the corresponding edge selections are compatible, that is, no two edges chosen
in different layers have a common edge.
Since conflicts are confined to edges whose endpoints lie in adjacent layers,
this compatibility check is local and can be performed in polynomial time. By propagating all valid configurations from $L_0$ to $L_p$ and keeping track of
the number of selected edges, the algorithm computes a maximum edge open packing
of $G$.
As the number of configurations per layer is polynomially bounded, the overall
algorithm runs in polynomial time, completing the proof of
Theorem~\ref{thm:biconvex}.

\subsection{Dynamic programming table and states.}

The dynamic programming algorithm processes the layers
$L_0, L_1, \ldots, L_p$ in order.
For each layer $L_i$, we consider all possible ways in which a solution to \maxeop~can intersect the three-layer window
$L_{i-1} \cup L_i \cup L_{i+1}$.

Formally, the dynamic programming table is indexed by triples $(i, \Gamma, x)$,
where:
\begin{itemize}
    \item $i \in \{0,1,\ldots,p\}$ denotes the current layer;
    \item $\Gamma$ encodes a local configuration describing how the solution
    intersects layer $L_i$; and
    \item $x$ denotes the total number of edges selected in the partial solution
    induced by layers $L_0, \ldots, L_{i+1}$.
\end{itemize}

The configuration $\Gamma$ specifies:
(i) the type of star intersecting $L_i$ (Type~1 or Type~2),
(ii) the location of the center vertex (in $L_{i-1}$, $L_i$, or $L_{i+1}$, as
allowed by the star type), and
(iii) the first and last leaf vertices in the appropriate layer orderings.

In particular, for each non-empty leaf set in an adjacent layer, the configuration
stores two pairs of endpoints: one pair in the ordering $\sigma_{\cdot,1}$ and
one pair in the ordering $\sigma_{\cdot,2}$.  
By Lemma~\ref{lem:structural-lemma}, these endpoints determine exactly which
vertices of the layer belong to the star (as the intersection of the two induced
intervals), and hence uniquely determine the set of edges selected between
$L_{i-1}$ and $L_i$, as well as between $L_i$ and $L_{i+1}$.

A table entry $(i,\Gamma,x)$ is feasible if there exists an edge open packing on
the subgraph induced by layers $L_0,\ldots,L_{i+1}$ that realizes the
configuration $\Gamma$ at layer $L_i$ and contains exactly $x$ edges.

\medskip
\noindent\textbf{Formal description of a guess $\Gamma$.}
Fix a layer $L_i$.
A guess $\Gamma$ describes how a solution to \maxeop~ 
intersects the
three-layer window $L_{i-1} \cup L_i \cup L_{i+1}$.
It consists of the following components.

\begin{enumerate}
    \item A type parameter $\tau_i \in \{0,1,2,3\}$, where
    \begin{itemize}
        \item $\tau_i = 0$: no star intersects $L_i$;
        \item $\tau_i = 1$: only a \emph{Type~1} star intersects $L_i$;
        \item $\tau_i = 2$: only a \emph{Type~2} star intersects $L_i$;
        \item $\tau_i = 3$: both a \emph{Type~1} and a \emph{Type~2} star intersect $L_i$.
    \end{itemize}

    \item If $\tau_i \in \{1,3\}$ (Type~1 present), then $\Gamma$ specifies:
    \begin{itemize}
        \item a center vertex $w_1 \in L_i$; and
        \item for each adjacent layer $L_{i+\delta}$, $\delta \in \{-1,+1\}$,
        the first and last leaves of the star in that layer, according to both
        orderings of $L_{i+\delta}$:
        $a_{1,\delta}, b_{1,\delta} \in L_{i+\delta} \text{ (w.r.t.\ }\sigma_{i+\delta,1}\text{) }
        \text{and }
        c_{1,\delta}, d_{1,\delta} \in L_{i+\delta} \quad\text{(w.r.t.\ }\sigma_{i+\delta,2}\text{)}$. 
        
        The leaves of the star in $L_{i+\delta}$ are exactly those vertices
        lying between these endpoints in both orderings, i.e.,
         $S_{w_1} \cap L_{i+\delta}
        = [a_{1,\delta}, b_{1,\delta}]_{\sigma_{i+\delta,1}}
          \cap [c_{1,\delta}, d_{1,\delta}]_{\sigma_{i+\delta,2}}$. If the star has no leaves in $L_{i+\delta}$, then the corresponding
        endpoints are marked as empty.
    \end{itemize}

    \item If $\tau_i \in \{2,3\}$ (Type~2 present), then $\Gamma$ specifies:
    \begin{itemize}
        \item for each $\delta \in \{-1,+1\}$, an optional center vertex
        $w_{2,\delta} \in L_{i+\delta}$ (at most one per layer); 
        \item for each chosen center $w_{2,\delta}$, the first and last leaves of
        the star in layer $L_i$ according to both orderings of $L_i$:
            $a_{2,\delta}, b_{2,\delta} \in L_i \text{ (w.r.t.\ }\sigma_{i,1}\text{)}$
            $\text{and } 
        c_{2,\delta}, d_{2,\delta} \in L_i \quad\text{(w.r.t.\ }\sigma_{i,2}\text{)}$.

        The leaves of the star in $L_i$ are exactly those vertices lying between
        these endpoints in both orderings, i.e.,
        $S_{w_{2,\delta}} \cap L_i
        = [a_{2,\delta}, b_{2,\delta}]_{\sigma_{i,1}}
          \cap [c_{2,\delta}, d_{2,\delta}]_{\sigma_{i,2}}$. 
        If the center contributes no leaves in $L_i$, then the corresponding
        endpoints are marked as empty.
    \end{itemize}
\end{enumerate}

By Observation~\ref{obs:parallel}, for each $\delta \in \{-1,+1\}$, at most one  
Type~1 star and at most one Type~2 star may use edges between $L_i$ and
$L_{i+\delta}$.
By Lemma~\ref{lem:structural-lemma}, the leaves of each star in a given layer
are exactly the intersection of the intervals determined by the endpoints in
both orderings. 

\begin{lemma}[Invalid guess $\Gamma$]
\label{lem:invalid-tuples-independent}
Let $L_i$ be a layer of a biconvex bipartite graph with a multi-chain ordering, and
let $\Gamma$ be a guess describing the stars intersecting $L_i$.
Assume that the subgraph induced by each layer $L_i$ is independent.
The guess $\Gamma$ is \emph{invalid} if any of the following conditions hold:

\begin{enumerate}
    \item For some pair of endpoints $(a,b)$ or $(c,d)$ specified by $\Gamma$
    for a leaf set, the ordering is violated in the corresponding layer, i.e.,
    $a \succ_{\sigma_{j,1}} b$ or $c \succ_{\sigma_{j,2}} d$ for 
    layer $L_j$.

    \item A center vertex is chosen outside its allowed layer(s), namely,
    $w_1 \notin L_i$ for a Type~1 star, or
    $w_{2,\delta} \notin L_{i+\delta}$ for a Type~2 star.

    \item The edges induced by $\Gamma$ contain two edges that are the end-edges of
    a $P_4$ in $G$. 

    \item A star specified by $\Gamma$ has no leaves in any layer, i.e.,
    all the associated leaf sets are empty. 
\end{enumerate}

Any guess $\Gamma$ that satisfies none of the above conditions is called
\emph{valid}.
\end{lemma}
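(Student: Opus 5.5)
The lemma is essentially a definition together with the claim that each listed condition rules out $\Gamma$. So the thing to prove is: if $\Gamma$ meets any of conditions 1--4, then no edge open packing $D$ realizes $\Gamma$ at $L_i$ in the sense used for table entries $(i,\Gamma,x)$, or else such a realization is redundant and can be discarded without losing optimality. I would prove this contrapositively, one condition at a time. Fix an edge open packing $D$ and the configuration $\Gamma_D$ it induces on the window $L_{i-1}\cup L_i\cup L_{i+1}$, taken as in the construction of guesses: the stars of $D$ meeting $L_i$, their centres, and the extreme leaves in both orderings. I would then check that $\Gamma_D$ violates none of the four conditions.

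For condition~1, the endpoints $a,b$ in $\Gamma_D$ are by definition the first and last leaves of a non-empty leaf set in $\sigma_{j,1}$, so $a\preceq_{\sigma_{j,1}} b$; the same holds for $c,d$. If a leaf set is empty, its endpoints are marked empty and the condition does not apply. A guess with $a\succ b$ would give $[a,b]=\emptyset$, which contradicts the non-emptiness implicit in specifying endpoints. Condition~2 is immediate from the definitions of Type~1 and Type~2 stars: a Type~1 centre lies in $L_i$ and a Type~2 centre lies in $L_{i\pm1}$. For condition~4, every star of $D$ is the union of at least one edge of $D$ at its centre. By Lemma~\ref{lem:structural-lemma}, the leaves in each adjacent layer are exactly the interval intersections, so a recorded star of $\Gamma_D$ has a non-empty leaf set in some layer. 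A guess with all leaf sets empty selects no edges for that star; it either duplicates a smaller $\tau_i$ or describes no star at all, so it can be discarded.

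The substantive step is condition~3. If the edges determined by $\Gamma$ (computed as interval intersections via Lemma~\ref{lem:structural-lemma}) contain two edges $e_1,e_2$ that are end-edges of a $P_4$ $e_1,f,e_2$ in $G$, then $f$ is a common edge joining an endpoint of $e_1$ to an endpoint of $e_2$. So the selected set is not an edge open packing, and no $D$ can realize $\Gamma$. Because the layers are independent and $G$ is bipartite, triangles cannot occur, so the $P_4$ test captures every conflict among the window's edges. This is exactly where the independence hypothesis is used.

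I expect the main obstacle to be making the correspondence between $\Gamma$ and $D$ precise. One must check that the edge set attributed to $\Gamma$ equals $D$ restricted to the window; this relies on Observation~\ref{obs:parallel}, which allows at most one star per consecutive layer pair and direction. With that correspondence, a conflict in $\Gamma$ yields a conflict in $D$. I would state this correspondence first as a short claim, and then derive the four cases from it as above.
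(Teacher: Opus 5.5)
The paper gives no proof of this statement. It serves only as the definition of the filter applied to guesses, with Observation~\ref{obs:parallel} and Lemma~\ref{lem:structural-lemma} as informal motivation. Your proposal therefore supplies a soundness justification the paper omits, and your per-condition checks are correct in their direct form. A partial solution $T$ realizing $\Gamma$ must satisfy three requirements: it contains exactly the edges $wv$ with $v$ in the prescribed interval intersections, it contains no other edges at the specified centres, and it has the specified extreme leaves. Hence:
\begin{itemize}
\item Condition~1 makes realization impossible, since no nonempty set has first element $a$ and last element $b$ when $a\succ b$.
\item Condition~3 also makes realization impossible, since the prescribed edges already contain a conflicting pair. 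In a bipartite graph, two edges conflict exactly when they are the end-edges of a (not necessarily induced) path on four vertices.
\item Condition~4 contradicts the requirement that the stars of $T$ at $L_i$ be exactly those of $\Gamma$, or else merely duplicates a smaller $\tau_i$.
\item Condition~2 is definitional.
\end{itemize}

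What needs fixing is the `correspondence' you single out as the main obstacle and plan to prove first. You state it as: for an arbitrary edge open packing $D$, the edge set determined by $\Gamma_D$ equals $D$ on the window. As stated, this is false. Every subset of an edge open packing is again one. So deleting from a star a leaf that lies strictly between the extreme leaves in both orderings gives a packing whose leaf set is strictly smaller than $[a,b]_{\sigma_{j,1}}\cap[c,d]_{\sigma_{j,2}}$, while $\Gamma_D$ is unchanged. Observation~\ref{obs:parallel} cannot supply the claim either. It only says that the solution edges between two consecutive layers form a single star, not which vertices between the extremes are used.

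You do not need this claim. State the contrapositive as `if some $T$ realizes $\Gamma$, then $\Gamma$ is valid'. The equality of edge sets and the matching of extreme leaves are then part of the definition of realizing. Likewise, your appeal to Lemma~\ref{lem:structural-lemma} for condition~4 is unnecessary: non-emptiness only uses the trivial inclusion of the actual leaves in the interval intersection. That lemma matters only for the converse, completeness direction, namely that restricting to interval-defined leaf sets loses no optimum. Even there it holds only for maximal packings, since its proof works by adding a missing edge $wv$.

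Finally, the independence hypothesis does no work in your argument. Layers are independent automatically, and triangle-freeness comes from bipartiteness.
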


\noindent\textbf{DP Table Entries.}
We define
\[
\DP[i, \Gamma, x] =
\begin{cases}
\text{true,} &
\text{if there exists a partial solution of size $x$} \\
& \text{on layers $L_0,\ldots,L_{i+1}$ realizing $\Gamma$ at $L_i$,} \\
\text{false,} & \text{otherwise.}
\end{cases}
\]
\medskip
\noindent\textbf{Realizing a Guess $\Gamma$.}
A partial solution $T \subseteq E$ on layers $L_0,\ldots,L_{i+1}$ is said to
\emph{realize} a guess $\Gamma$ at layer $L_i$ if the following conditions hold:

\begin{enumerate}
    \item For each Type~1 star specified in $\Gamma$ with center $w_1 \in L_i$,
    the solution $T$ contains exactly the edges
        $w_1 v \quad \text{for all } v \in S_{w_1} \cap L_{i+\delta}$, 
    where $S_{w_1} \cap L_{i+\delta}
    = [a_{1,\delta},b_{1,\delta}]_{\sigma_{i+\delta,1}}
      \;\cap\;
      [c_{1,\delta},d_{1,\delta}]_{\sigma_{i+\delta,2}}
    \quad (\delta\in\{-1,+1\})$, and $T$ contains no other edges incident to $w_1$.

    \item For each Type~2 star specified in $\Gamma$ with center 
    $w_{2,\delta} \in L_{i+\delta}$, the solution $T$ contains exactly the edges $w_{2,\delta} v \quad \text{for all } v \in S_{w_{2,\delta}} \cap L_i$, 
    where $S_{w_{2,\delta}} \cap L_i
    = [a_{2,\delta},b_{2,\delta}]_{\sigma_{i,1}}
      \;\cap\;
      [c_{2,\delta},d_{2,\delta}]_{\sigma_{i,2}}
    \quad (\delta\in\{-1,+1\})$, 
    and $T$ contains no other edges incident to $w_{2,\delta}$.

    \item The set of edges $T$ restricted to layers $L_0,\ldots,L_{i+1}$ is an
    edge open packing.
    Equivalently, since $G$ is bipartite, no two edges in $T$ are the end-edges
    of a $P_4$.

    \item The stars induced by $T$ on layer $L_i$ correspond exactly to those
    encoded by $\Gamma$; in particular, the centers and the first and last
    vertices of each leaf set match those specified in $\Gamma$.
\end{enumerate}

Intuitively, a partial solution realizes $\Gamma$ if, when restricted to the layers
$L_0,\ldots,L_{i+1}$, the selected edges form precisely the stars described by
$\Gamma$ at layer $L_i$ and satisfy the edge open packing constraints.







\medskip
\noindent\textbf{Computation of a DP    entry.}
To fill the DP table, we iterate over all entries
$(i-1,\Gamma',x')$ and $(i,\Gamma)$.
Let $\Delta(\Gamma)$ denote the number of edges described by $\Gamma$ that are
incident to $L_{i+1}$ (and hence are counted for the first time when processing
layer $L_i$). A transition from $(i-1,\Gamma',x')$ to $(i,\Gamma,x)$, where
$x = x' + \Delta(\Gamma)$, is allowed only if $\Gamma'$ and $\Gamma$ are compatible. Here compatibility means:

\begin{itemize}
    \item The stars described by $\Gamma'$ and $\Gamma$ induce the same structure
    on layer $L_i$ (same centers and same leaf blocks in $L_i$);

    \item No edge selected by $\Gamma'$ shares a common edge with any edge
    selected by $\Gamma$.
    Equivalently, in the bipartite graph, no selected edges form the two
    end-edges of a $P_4$. Such a condition can be checked in polynomial time given the guesses $\Gamma'$ and $\Gamma$. 
\end{itemize}

If compatible, we set: 
$\DP[i,\Gamma, x' + \Delta(\Gamma)] \;:=\; \text{true}$. 

\medskip
\noindent\textbf{Answer.}
After processing all layers, we obtain the optimum value as
$$\max\{x \mid \DP[p,\Gamma,x] = \text{true} \text{ for some terminal guess } \Gamma\}$$
equal to the size of a maximum edge open packing in $G$. A guess $\Gamma$ at layer $L_p$ is terminal if it is valid and it does not
require any edges incident to a non-existent layer $L_{p+1}$.

\begin{lemma}
\label{lem:dp-correctness}
The recurrence described above is correct. 


\end{lemma}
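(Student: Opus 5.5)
We prove Lemma~\ref{lem:dp-correctness} by induction on the layer index $i$, with the invariant that $\DP[i,\Gamma,x]$ is true \emph{if and only if} some edge open packing $T$ of $G[L_0\cup\cdots\cup L_{i+1}]$ realizes $\Gamma$ at $L_i$ and has $|T|=x$. At the end this gives that the largest $x$ with $\DP[p,\Gamma,x]$ true for a terminal $\Gamma$ equals $\rho^o_e(G)$. The base case $i=0$ is a direct check. Since $|L_0|=1$, a valid guess at $L_0$ is either empty or a single Type~1 star centred at the unique vertex of $L_0$ with leaves in $L_1$. Each such guess is realized precisely by its own edge set.

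\textbf{Soundness.} Assume the transition from $(i-1,\Gamma',x')$ to $(i,\Gamma,x'+\Delta(\Gamma))$ fires. By induction there is a packing $T'$ on $L_0,\ldots,L_i$ realizing $\Gamma'$. Set $T=T'\cup E_{i,i+1}(\Gamma)$, where $E_{i,i+1}(\Gamma)$ is the set of edges of $\Gamma$ between $L_i$ and $L_{i+1}$; these are exactly the $\Delta(\Gamma)$ new edges. We must show $T$ is an edge open packing. In a bipartite graph a conflict is a $P_4$ whose end edges are both selected. Since non-consecutive layers are non-adjacent, any such $P_4$ involving a new edge lies within $L_{i-1}\cup L_i\cup L_{i+1}$, with possibly one endpoint in $L_{i-2}$ when the middle edge is between $L_{i-1}$ and $L_i$. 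Both end edges of such a $P_4$ are then described by $\Gamma'$ or by $\Gamma$. The case analysis is therefore:
\begin{itemize}
\item both end edges come from $\Gamma$: excluded by validity condition~3;
\item one end edge from each guess: excluded by compatibility;
\item both from $\Gamma'$: excluded by the induction hypothesis.
\end{itemize}
The first compatibility clause, that $\Gamma'$ and $\Gamma$ agree on $L_i$, ensures that the edges between $L_{i-1}$ and $L_i$ are not counted twice. Hence $|T|=x'+\Delta(\Gamma)$, and $T$ realizes $\Gamma$.

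\textbf{Completeness.} Take any packing $T$ on $L_0,\ldots,L_{i+1}$. Define $\Gamma$ as the guess read off from $T$ at $L_i$, and $\Gamma'$ as the guess read off from $T$ restricted to $L_0,\ldots,L_i$ at $L_{i-1}$. The following facts show that $\Gamma$ is a well-defined valid guess:
\begin{itemize}
\item By Observation~\ref{obs:parallel}, the edges of $T$ between any two consecutive layers form a single star. This justifies the bounded description: one Type~1 star and at most one Type~2 star per side.
\item By Lemma~\ref{lem:structural-lemma}, each leaf set is the intersection of the two intervals fixed by its first and last vertices, so the endpoints recorded in $\Gamma$ reproduce $T$ exactly.
\end{itemize}
Compatibility of $\Gamma'$ and $\Gamma$ holds because both are restrictions of the same packing $T$. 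Applying the induction hypothesis to $T\setminus E_{i,i+1}$ then shows that the transition into $(i,\Gamma,|T|)$ fires.

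\textbf{Main obstacle.} The hardest step is the use of Lemma~\ref{lem:structural-lemma} in the completeness direction. That lemma is proved for solutions satisfying a maximality property, and it only asserts contiguity, not that every packing has this form. I would resolve this by restricting completeness to \emph{some} optimum packing rather than every packing. The plan is to start from a maximum packing $T^\ast$ and argue, via the exchange in the proof of Lemma~\ref{lem:structural-lemma}, that filling the gaps of each star's interval intersection neither creates a conflict nor decreases $|T^\ast|$. This yields an optimum packing in which every star has the interval form. Completeness is then needed only for the prefixes of this normalized $T^\ast$, which suffices to show that the maximum returned by the DP equals $\rho^o_e(G)$. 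Soundness, which is an upper-bound argument, does not need the normal form.
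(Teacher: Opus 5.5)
\textbf{The gap: reading $\Gamma$ off a packing in your completeness direction.} You say Observation~\ref{obs:parallel} makes the read-off $\Gamma$ a legal guess, but it does not. The observation only says that the selected edges between $L_{i-1}$ and $L_i$ form one star, and that those between $L_i$ and $L_{i+1}$ form one star. It does not force these two stars to share a centre. Your phrase ``one Type~1 star and at most one Type~2 star per side'' is also not what a guess allows. As formally defined, a guess has a single Type~1 centre $w_1\in L_i$, and Type~2 centres only in $L_{i\pm1}$. So consider a packing with a left star centred at $w\in L_i$ and a right star centred at $w'\in L_i$, $w\neq w'$, each with at least two edges, so that neither can be re-read as Type~2. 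It realizes no guess at $L_i$, and filling interval gaps does not change this.

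This happens for optimal packings. Take layers $L_0=\{r\}$, $L_1=\{s,t_1,t_2\}$, $L_2=\{a_1,a_2,c\}$, $L_3=\{w,w'\}$, $L_4=\{b_1,b_2\}$, with edges $rs,rt_1,rt_2,sa_1,sa_2,sc,wa_1,wa_2,wc,w'c,w'b_1,w'b_2$. Consecutive layers induce chain graphs, and the graph is biconvex, with orders $r,a_1,a_2,c,b_1,b_2$ and $t_1,t_2,s,w,w'$. In a bipartite graph an edge open packing is exactly the edge set of an induced star forest without isolated vertices. A short case analysis on whether $s$ and $w$ are used then shows that $\rho^o_e(G)=6$. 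This value is attained only by $\{rt_1,rt_2,wa_1,wa_2,w'b_1,w'b_2\}$, which has two centres in $L_3$. Your own soundness argument shows that every true entry is witnessed by a packing that realizes a guess at every layer. Hence the table never reaches $6$ here.

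So the recurrence with a single $w_1$ is not correct. The paper's proof breaks at the same point, where it takes ``the guess at layer $L_{i-1}$ induced by $T_i$''. The missing idea is to change the state so that the left star and the right star at $L_i$ are recorded independently, each with its own centre on either side. The running-time count of Type~1 stars ``with possibly different centers'' seems to intend this. With that state, your soundness case analysis and your completeness argument go through.

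\textbf{Your maximality concern is legitimate.} Under the paper's notion of conflict (a $P_4$ in the bipartite case), a star may use any subset of its centre's neighbourhood. The paper's ``if'' direction nevertheless applies Lemma~\ref{lem:structural-lemma} to arbitrary partial solutions. Restricting completeness to a maximum packing does fix this, because a maximum packing has no gaps.

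You should redo the exchange rather than cite it:
\begin{enumerate}
\item A gap vertex $v$ is adjacent to $w$ because $v\preceq_{\sigma_{i+1,1}}v_\ell$. The paper's Claim~1 uses the inclusion in the wrong direction.
\item A conflict of $wv$ through $L_i$ is already a conflict for $wv_f$, or for another edge of the star.
\item A conflict of $wv$ through $L_{i+2}$ is already a conflict for $wu_\ell$, or for another edge of the star.
\end{enumerate}

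Finally, your base case omits Type~2 guesses at $L_0$, where $r$ is a leaf of a star centred in $L_1$.
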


We now prove that the values computed during the transitions are correct. Towards this, we will show the following equivalent lemma. 


	

\begin{lemma}
	Let $G$ be a biconvex bipartite graph with a multi-chain ordering
	$L_0, L_1, \dots, L_p$.
	For each layer $L_i$, let $\Gamma$ be a valid guess of the local solution structure
	and let $x \in \mathbb{N}$.
	Then
	
	\[
	\DP[i, \Gamma, x] =
	\begin{cases}
		\text{true,} &
		\text{if there exists a partial solution of size $x$} \\
		& \text{on layers $L_0,\ldots,L_{i+1}$ realizing $\Gamma$ at $L_i$,} \\
		\text{false,} & \text{otherwise.}
	\end{cases}
	\]
	
	Moreover, after processing all layers, the maximum weight $x$ for which
	$\DP[p, \Gamma, x] = \text{true}$ for some terminal guess $\Gamma$
	equals the size of a maximum edge open packing in $G$.
\end{lemma}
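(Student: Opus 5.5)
We prove the lemma by induction on the layer index $i$, establishing both directions of the equivalence at every step. After the induction we read off the optimum at the terminal layer.

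\textbf{Base case.} We treat $i=0$ directly. Here $L_0$ is a single vertex $v_0$, and the window is $L_0\cup L_1$. A valid guess $\Gamma$ at $L_0$ is one of the following:
\begin{itemize}
\item the empty configuration;
\item a Type~1 star centred at $v_0$ whose leaves in $L_1$ form the interval intersection $[a,b]_{\sigma_{1,1}}\cap[c,d]_{\sigma_{1,2}}$;
\item a Type~2 star centred in $L_1$ with leaf $v_0$.
\end{itemize}
The initialisation sets $\DP[0,\Gamma,x]$ to true exactly when $x$ equals the number of edges encoded by $\Gamma$. Validity conditions (1)--(4) of Lemma~\ref{lem:invalid-tuples-independent} guarantee that these edges form an edge open packing of $G[L_0\cup L_1]$. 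Hence the base case is correct.

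\textbf{Soundness (``true'' implies a partial solution exists).} Suppose $\DP[i,\Gamma,x]$ is true. Then it was set by a compatible transition from some true entry $(i-1,\Gamma',x')$ with $x=x'+\Delta(\Gamma)$. By induction there is a partial solution $T'$ on $L_0,\dots,L_i$ of size $x'$ realising $\Gamma'$. Let $T=T'\cup E_\Gamma$, where $E_\Gamma$ denotes the edges of $\Gamma$ incident to $L_{i+1}$.
\begin{itemize}
\item \emph{Size.} The first compatibility clause (same structure on $L_i$) ensures the edges between $L_{i-1}$ and $L_i$ are counted exactly once, so $|T|=x$.
\item \emph{EOP property.} Two edges of $T$ in conflict are the end-edges of a $P_4$ in the bipartite graph $G$. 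Since non-consecutive layers are anticomplete, such a $P_4$ spans at most four consecutive layers. If both edges lie in $T'$, the induction hypothesis rules out the conflict. If both lie in $E_\Gamma$, validity condition (3) rules it out. If one lies in $E_\Gamma$ and the other in $T'$, the latter is incident to $L_{i-1}\cup L_i$, so it is described by $\Gamma'$; the second compatibility clause then rules out the conflict. Edges of $T'$ lying entirely below $L_{i-1}$ cannot be at distance one from edges touching $L_{i+1}$.
\end{itemize}
Thus $T$ realises $\Gamma$.

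\textbf{Completeness (a partial solution forces ``true'').} Let $T$ be a partial solution of size $x$ on $L_0,\dots,L_{i+1}$ realising $\Gamma$. Remove from $T$ the edges between $L_i$ and $L_{i+1}$ to obtain $T'$. Let $\Gamma'$ be the configuration $T'$ induces at $L_{i-1}$. By Observation~\ref{obs:parallel}, at most one star uses each consecutive layer pair. By Lemma~\ref{lem:structural-lemma}, each leaf block is an interval intersection in the two orderings. So $\Gamma'$ is among the enumerated guesses, and it is valid because $T'$ is an EOP. By induction $\DP[i-1,\Gamma',|T'|]$ is true. The pair $(\Gamma',\Gamma)$ is compatible since both are restrictions of the single EOP $T$, and $|T|=|T'|+\Delta(\Gamma)$. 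Hence $\DP[i,\Gamma,x]$ is set to true.

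\textbf{Main obstacle.} The delicate step is completeness. Lemma~\ref{lem:structural-lemma} as proved relies on a maximality exchange, so an arbitrary partial solution's leaf set need not be the full interval intersection. We therefore apply an exchange first: replace $T$ by a solution of at least the same size whose stars are interval-closed. Claims~1 and~2 in that proof show that adding the in-between leaves creates no new conflict. It then suffices to prove completeness for such normalised solutions, which is enough for the optimisation claim.

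\textbf{Final statement.} At $i=p$, terminal guesses use no edges to $L_{p+1}$. Soundness shows every true $x$ is the size of an EOP of $G$. Completeness, applied to a normalised maximum EOP, shows the optimum $x$ is attained. Hence the maximum true $x$ equals $\rho^o_e(G)$.
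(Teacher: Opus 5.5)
Your outline follows the paper's proof step for step:
\begin{itemize}
\item induction on $i$;
\item soundness by gluing $T=T'\cup E(\Gamma)$;
\item completeness by restricting $T$ and reading off $\Gamma'$ at $L_{i-1}$;
\item terminal guesses at $L_p$.
\end{itemize}
Your normalisation step repairs something the paper glosses over: Lemma~\ref{lem:structural-lemma} is proved by a maximality exchange, which says nothing about arbitrary partial solutions.

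\textbf{The gap.} The step that fails is ``so $\Gamma'$ is among the enumerated guesses''. Observation~\ref{obs:parallel} gives one star per \emph{pair} of consecutive layers, not one Type~1 centre per \emph{layer}. A guess as formally defined has a single Type~1 centre $w_1$. So if $T'$ has two distinct stars centred in $L_{i-1}$, one using the $L_{i-2}L_{i-1}$ edges and one using the $L_{i-1}L_i$ edges, no guess describes it.

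This can be forced. Take the layers
\begin{itemize}
\item $L_0=\{r\}$;
\item $L_1=\{d_1,\dots,d_k,u\}$, all adjacent to $r$;
\item $L_2=\{s_1,\dots,s_k,s^*\}$, whose only neighbour in $L_1$ is $u$;
\item $L_3=\{w_A,w_B\}$, with $N(w_A)=L_2$ and $N(w_B)=\{s^*\}\cup L_4$;
\item $L_4=\{b_1,\dots,b_k\}$.
\end{itemize}
This is a valid multi-chain ordering of a connected biconvex graph. For $k\ge 2$ its unique maximum edge open packing is $\{rd_j\}\cup\{w_As_j\}\cup\{w_Bb_j\}$, of size $3k$. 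It has two stars centred in $L_3$, each with $k\ge 2$ leaves. So with the guess set as formally defined, the lemma's final claim is false, not merely unproved.

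\textbf{Fix.} Your argument goes through once $\Gamma$ carries independent centres for the star on $L_{i-1}L_i$ and the star on $L_iL_{i+1}$. The paper's $O(n^{10})$ count ``with possibly different centers'' suggests this is intended, but you must adopt that encoding explicitly. The paper's proof makes the same unexamined leap (``such a guess exists because $T_i$ uniquely determines the stars'').

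\textbf{Two smaller repairs in the normalisation.} First, Claims~1 and~2 do not by themselves show that an in-between vertex $v$ can be added safely.
\begin{itemize}
\item Claim~1 as written is a non sequitur. From $N(v)\cap L_i\subseteq N(v_f)\cap L_i$ and $w\in N(v_f)$ nothing follows. Adjacency to $w$ comes from $v\preceq_{\sigma_{i+1,1}}v_\ell$.
\item Safety needs both $N(v)\cap L_i\subseteq N(v_f)\cap L_i$ and $N(v)\cap L_{i+2}\subseteq N(u_\ell)\cap L_{i+2}$. Claim~2 supplies only the second.
\item The mirrored argument is needed for leaf blocks on the other side of their centre.
\end{itemize}

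Second, proving completeness only for normalised $T$ establishes a weaker form of the lemma's first assertion. To recover the stated equivalence, trade any $T$ realising $\Gamma$ for a normalised one of the same size that still realises $\Gamma$:
\begin{itemize}
\item close every star, which is safe;
\item replace each leaf block not recorded in $\Gamma$ by the initial $\sigma_{\cdot,1}$-segment of its closure of the original size.
\end{itemize}
This works because such segments are again interval-closed, and subsets of edge open packings are edge open packings.
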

\begin{proof}
	We prove the lemma by induction on the layer index $i$.
	
	\medskip
	\noindent\textbf{Induction hypothesis.}
	For every $i \in \{0,\dots,p\}$, every valid guess $\Gamma$ at layer $L_i$ and
	every $x \in \mathbb{N}$, the table entry $\DP[i,\Gamma,x]$ is true
	iff there exists a partial solution $T$ of size $x$ on layers
	$L_0,\dots,L_{i+1}$ that realizes $\Gamma$ at $L_i$.
	
	\medskip
	\noindent\textbf{Base case ($i=0$).}
	A partial solution on layers $L_0$ and $L_1$ consists solely of edges between
	$L_0$ and $L_1$.
	For any valid guess $\Gamma$ at layer $L_0$, the set of edges described by $\Gamma$
	is exactly the set of edges that can be chosen in the window $L_0 \cup L_1$.
	Hence, $\DP[0,\Gamma,x]$ is true if and only if the number of edges specified by
	$\Gamma$ equals $x$ and these edges form a valid edge open packing.
	This is precisely the definition of realizing $\Gamma$ at $L_0$.
	Therefore the base case holds.
	
	\medskip
	\noindent\textbf{Induction step.}
	Assume the hypothesis holds for all layers up to $i-1$.
	We prove it for layer $i$.
	
	Let $\Gamma$ be a valid guess at layer $L_i$ and let $x \in \mathbb{N}$.
	We must show that $\DP[i,\Gamma,x]$ is true
	iff there exists a partial solution of size $x$ on layers $L_0,\dots,L_{i+1}$
	realizing $\Gamma$ at $L_i$.
	
	\medskip
	\noindent\textbf{(If direction.)}
	Assume that there exists a partial solution $T$ of size $x$ on layers
	$L_0,\dots,L_{i+1}$ realizing $\Gamma$ at $L_i$.
	
	Let $T_i$ be the restriction of $T$ to layers $L_0,\dots,L_i$.
	Define $\Gamma'$ to be the guess at layer $L_{i-1}$ induced by $T_i$.
	Such a guess exists because $T_i$ uniquely determines the stars that intersect
	layer $L_{i-1}$, and by Lemma~\ref{lem:structural-lemma} these stars correspond to
	intersections of intervals in the two orderings.
	
	Let $x'$ be the number of edges of $T_i$ that are counted in the DP table up to
	layer $i$, i.e., edges with an endpoint in $L_i$.
	
	Since $T_i$ is a valid partial solution on layers $L_0,\dots,L_i$ realizing
	$\Gamma'$ at $L_{i-1}$, by the induction hypothesis we have
	$\DP[i-1,\Gamma',x'] = \text{true}$.
	
	Moreover, $T$ contains exactly the edges described by $\Gamma$ between
	$L_i$ and $L_{i+1}$, and these edges are not counted in $x'$.
	Thus
	\[
	x = x' + \Delta(\Gamma).
	\]
	
	Finally, since $T$ is an edge open packing, no edge selected in
	$T_i$ shares a common edge with any edge selected between $L_i$ and $L_{i+1}$.
	Therefore the guesses $\Gamma'$ and $\Gamma$ are compatible, and the DP transition
	from $(i-1,\Gamma',x')$ to $(i,\Gamma,x)$ is allowed.
	Hence the DP sets $\DP[i,\Gamma,x] = \text{true}$.
	
	\medskip
	\noindent\textbf{(Only-if direction.)}
	Assume $\DP[i,\Gamma,x] = \text{true}$.
	Then by definition of the DP table, there exists a previous entry
	$(i-1,\Gamma',x')$ such that:
	\begin{enumerate}
		\item $\DP[i-1,\Gamma',x'] = \text{true}$,
		\item $x = x' + \Delta(\Gamma)$, and
		\item $\Gamma'$ and $\Gamma$ are compatible.
	\end{enumerate}
	
	By the induction hypothesis, since $\DP[i-1,\Gamma',x']$ is true,
	there exists a partial solution $T'$ of size $x'$ on layers
	$L_0,\dots,L_i$ realizing $\Gamma'$ at $L_{i-1}$.
	
	Compatibility of $\Gamma'$ and $\Gamma$ implies that the edges described by
	$\Gamma$ between $L_i$ and $L_{i+1}$ do not share a common edge with any edge of
	$T'$.
	Therefore, the set
	\[
	T = T' \cup E(\Gamma)
	\]
	(where $E(\Gamma)$ is the set of edges specified by $\Gamma$)
	is a valid edge open packing on layers $L_0,\dots,L_{i+1}$.
	
	Since $|E(\Gamma)| = \Delta(\Gamma)$, the size of $T$ is
	\[
	|T| = |T'| + \Delta(\Gamma) = x' + \Delta(\Gamma) = x.
	\]
	
	By construction, $T$ realizes $\Gamma$ at layer $L_i$, because the stars and
	leaf sets specified by $\Gamma$ are exactly the edges added between
	$L_i$ and $L_{i+1}$.
	Hence a partial solution of size $x$ exists, proving the only-if direction.
	
	\medskip
	\noindent\textbf{Conclusion.}
	By induction, the DP table satisfies the stated invariant for all layers $i$.
	
	\medskip
	\noindent\textbf{Optimality.}
	A guess $\Gamma$ at layer $L_p$ is terminal if it is valid and does not require
	edges incident to a non-existent layer $L_{p+1}$.
	For any terminal guess $\Gamma$, a partial solution realizing $\Gamma$ on
	$L_0,\dots,L_{p+1}$ is in fact a complete solution on all layers of $G$.
	Therefore, the maximum value $x$ such that $\DP[p,\Gamma,x]$ is true over all
	terminal guesses $\Gamma$ equals the size of a maximum edge open packing in $G$.
	
	This completes the proof. 
\end{proof}

The algorithm first computes a multi-chain ordering of the biconvex bipartite graph
and then processes the layers sequentially using dynamic programming.
Each DP state encodes a valid local configuration of stars intersecting a layer,
and transitions correspond to extending a partial solution to the next layer while
preserving the edge open packing property.

By Lemma~\ref{lem:dp-correctness}, the DP table correctly captures all feasible
partial solutions, and the maximum value stored in the final layer equals the
size of a maximum edge open packing.
Since the number of valid guesses per layer is polynomial and each
transition can be verified in polynomial time, the entire algorithm runs in
polynomial time, proving Theorem~\ref{thm:biconvex}. 

\subsection{Running Time Analysis}

Let $n$ denote the number of vertices in the graph.  
For each layer $L_i$, a guess $\Gamma$ encodes:

\begin{itemize}
	\item Type~1 stars (possibly in both $L_{i-1}$ and $L_{i+1}$) with possibly different centers.  
	Accounting for all centers and leaf blocks in both orderings gives $O(n^{10})$ possibilities.  
	
	\item Type~2 stars (possibly two centers, one in $L_{i-1}$ and one in $L_{i+1}$),  
	with leaves in $L_i$ determined by endpoints in both orderings, also giving $O(n^{10})$ possibilities.  
	
	\item Both Type~1 and Type~2 stars can coexist, leading to at most $O(n^{20})$ guesses $\Gamma$ per layer.  
	
	\item The third DP parameter $x$ counts the number of edges selected in the partial solution.  
	Since the total number of edges is at most $n$, this adds a multiplicative factor of $O(n)$.  
\end{itemize}

\noindent For each layer, we check transitions between all compatible guesses in the previous layer.  
This requires $O(n^{20} \cdot n^{20}) = O(n^{40})$ operations per layer for the guess compatibility.  

\noindent Including the $x$ parameter, the total work per layer is $O(n \cdot n^{40}) = O(n^{41})$.  
With $O(n)$ layers, the overall running time of the dynamic programming algorithm is
\[
O(n) \cdot O(n^{41}) = O(n^{42}).
\]

\section{FPT algorithm on chordal graphs}\label{EOP_Sec_4}

\begin{theorem}\label{thm:chordal}
    \maxeop~can be solved in FPT time when parameterized by the clique number of a chordal graph. 
\end{theorem}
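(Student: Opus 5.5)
The plan is to run a dynamic program over a nice tree decomposition built from a clique tree. A chordal graph $G$ has a clique tree whose bags are its maximal cliques, and this tree can be computed in linear time. So $G$ has a tree decomposition of width $\omega-1$ in which \emph{every bag is a clique}. I will convert it, in the standard way, to a nice tree decomposition with $O(n\omega)$ nodes. Every bag is still a subset of a clique, so every bag $B_t$ satisfies $|B_t|\le\omega$ and induces a clique.

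\emph{Step 1: local reformulation of the packing condition.} I will use the fact that $D$ is an EOP set if and only if, for every edge $f=xy$ of $G$, there do not exist $e_1,e_2\in D$ with $e_1\neq e_2$, $x\in e_1$, $y\in e_2$ and $f\notin\{e_1,e_2\}$. Each such test involves only the edges of $D$ incident with $x$ and $y$. It can therefore be checked as soon as $x$, $y$ and their incident selected edges are known. In a nice tree decomposition this is exactly when $x$ and $y$ both appear in a bag, since every edge of $G$ lies in some bag.

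\emph{Step 2: structure inside a bag (the source of the $2^{\omega}$ factor).} Let $B$ be a bag and $X=V(D)\cap B$. Any two $u,v\in X$ are adjacent because $B$ is a clique. Hence their selected edges $e_u\ni u$ and $e_v\ni v$ must coincide or one of them must be $uv$, since otherwise the edge $uv$ is a common edge. I will use this to prove that $D$ restricted to $X$ is rigid. Either $|X|\le 2$ and $X$ spans a single selected edge or lies on one selected edge, or all selected edges touching $X$ form a single star whose centre lies in $X$ or is a single outside vertex.

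Consequently a DP state at node $t$ consists of three parts. The first is the subset $X\subseteq B_t$ of vertices covered by the partial solution, which gives $2^{|B_t|}$ choices. The second is a polynomial amount of extra data: the centre $c$ of the star (a vertex of $B_t$, a marker ``forgotten vertex'', or none), and for each $v\in X$ whether its selected edge goes to $c$ or to a forgotten vertex. By the rigidity above, the latter is determined by $c$ up to $O(1)$ cases. The third is the current count. The table value is the maximum size of an EOP set of $G_t$, the graph induced by the vertices introduced below $t$, that is consistent with the state and has already passed all edge tests of Step 1 involving a forgotten vertex.

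\emph{Step 3: transitions.} At an introduce node for $v$, I will branch on whether $v\in X$ and on which selected edge at $v$ (inside the bag) is chosen. I will then test every edge $vu$ with $u\in B_t$ against Step 1, using the recorded selected edges at $u$. At a forget node for $v$, all neighbours of $v$ already lie in $B_t$ or are forgotten, so every test involving $v$ is settled. Only the fact that bag vertices may hold a selected edge to a forgotten vertex must be retained in the state. At a join node, the two sides share only the clique $B_t$, and forgotten vertices on different sides are non-adjacent. Hence the only new conflicts arise through edges inside $B_t$, and I will combine the children by requiring equal $X$ and compatible star data. By Step 2 such compatibility is a constant-size check. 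The number of states per node is $2^{\omega}\cdot\mathrm{poly}(n)$, which gives total time $O(2^{\omega}\cdot\mathrm{poly}(n))$. The answer is the maximum over the states at the root, whose bag is empty.

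\emph{Main obstacle.} I expect the hardest part to be making Step 2 precise enough that the auxiliary information really is polynomial rather than $\omega^{O(\omega)}$. A bag vertex may be covered by a selected edge to an already forgotten vertex. Two such bag vertices then force both edges to share the same forgotten endpoint, namely the star centre, and this must be justified from the clique property and Step 1. I would first prove this as a separate lemma: in any EOP set, $V(D)\cap K$ for a clique $K$ is contained in a single induced star of $D$. I would then derive all transition rules from it.
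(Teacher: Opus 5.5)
Your plan follows the paper's route: a dynamic program over a nice tree decomposition whose bags are cliques, storing centre/leaf roles and which bag vertices have a selected edge to a forgotten vertex (essentially the paper's $h$ and $g$). As written, however, it has a genuine gap in Step~1 and in the forget and join steps.

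The test for an edge $xy$ is \emph{not} settled once $x$ and $y$ share a bag, nor when $x$ is forgotten. The bag vertex $y$ can still receive selected edges later, at a subsequent introduce node or from the other child of a join. If $x\in V(D)$ and $xy\notin D$, then $y$ must never be covered afterwards. Your state (the set $X$ of currently covered bag vertices, the centre, which covered vertices have an edge to a forgotten vertex, the count) does not remember this.

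Concretely, take $P_5=abcde$ with clique bags $\{a,b\},\{b,c\},\{c,d\},\{d,e\}$. After $b$ is forgotten, the partial solutions $\emptyset$ and $\{ab\}$ have the same state apart from the count, so the DP keeps $\{ab\}$. Your introduce rule then adds $cd$ and $de$ (centre $d$), and the DP reports $3$. But $ab$ and $cd$ conflict through $bc$, and $\rho_e^o(P_5)=2$.

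The join step fails for the same reason. A vertex of $B_t$ covered in one child may be adjacent to a forgotten vertex of $V(D)$ in the other child, giving a conflict through an edge not inside $B_t$. Conversely, with $X$ meaning ``covered so far'', demanding equal $X$ in both children wrongly rejects a bag vertex all of whose selected edges lie in one child. The paper's $g$ and $h$ record the same information as your state, and its forget rule simply drops the vertex, so its write-up does not supply the missing ingredient either.

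A clean repair is to let $X$ be the \emph{final} set $V(D)\cap B_t$, guessed when a vertex is introduced. Add a status ``in $V(D)$ but not yet covered'' that must be cleared before the vertex is forgotten. Then:
\begin{itemize}
\item an edge $xy$ with $x\in V(D)$ forgotten and $y\notin X$ is harmless forever;
\item $y\in X$ forces $xy\in D$, together with your star checks;
\item equal $X$ at join nodes becomes the correct rule.
\end{itemize}
Alternatively, keep your semantics and add, for every uncovered bag vertex, a bit ``adjacent to a forgotten vertex of $V(D)$'', with a suitably relaxed join rule. That also works, and it is where a $2^{|B_t|}$ factor would genuinely enter.

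Your Step~2 can also be sharpened, which removes your ``Main obstacle''. Applying your own pair argument to all $u\neq v$ in $X$ gives $uv\in D$, so $X$ spans a clique of $D$. Since $D$ contains no triangle, $|V(D)\cap K|\le 2$ for every clique $K$. Hence neither a star with several leaves in $X$ nor two bag vertices sharing a forgotten centre can occur, because two leaves of one star inside a clique are adjacent. After the first repair, there are only $O(\omega^2)$ choices for $X$ rather than $2^{|B_t|}$.
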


\begin{observation}\label{obs:bag-structure}
Let $G$ be a chordal graph and let $(T,\mathcal{B})$ be a nice-tree decomposition of $G$.
For any bag $B \in \mathcal{B}$, at most one edge of $G[B]$ belongs to an edge open packing of $G$. 
\end{observation}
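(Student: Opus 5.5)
The plan is to reduce the statement to the fact that an edge open packing contains at most one edge from any clique, and to use the standard property of the decompositions one builds for chordal graphs: every bag induces a clique. I will assume $(T,\mathcal{B})$ is the nice tree decomposition obtained from a clique tree of $G$. Its bags are the maximal cliques, and refining it into leaf, introduce, forget and join nodes only produces bags that are subsets of these maximal cliques. Consequently, for every $B\in\mathcal{B}$, the induced subgraph $G[B]$ is complete.

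This clique property is the main point to get right. For an arbitrary nice tree decomposition the statement fails: a bag may contain two far-apart vertices of a long path together with their neighbours. So the proof will state explicitly that the decomposition is taken from a clique tree (equivalently, each bag is a clique). Such decompositions exist for chordal graphs with width $\omega-1$ and are computable in polynomial time.

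With $G[B]$ complete, suppose two distinct edges $e_1,e_2\in E(G[B])$ both lie in an edge open packing $D$. I split into two cases according to whether they share an endpoint.
\begin{enumerate}
\item \emph{Common endpoint}, say $e_1=uv$ and $e_2=uw$ with $v\neq w$. Since $v,w\in B$ and $B$ is a clique, $vw\in E(G)$. This edge is distinct from $e_1$ and $e_2$ and joins an endpoint of $e_1$ to an endpoint of $e_2$ (the triangle case in the definition), so $e_1$ and $e_2$ conflict.
\item \emph{Disjoint}, say $e_1=uv$ and $e_2=wz$ with all four vertices distinct. Then $uw\in E(G)$ because $B$ is a clique, and $uw$ is a third edge linking $e_1$ and $e_2$, creating the forbidden $P_4$-type connection.
\end{enumerate}
In both cases $D$ would not be an edge open packing, a contradiction. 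Hence $|D\cap E(G[B])|\le 1$ for every bag $B$ and every edge open packing $D$, which is the claim.
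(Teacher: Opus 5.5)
Your argument is correct and is the same as the paper's: bags induce cliques, and then two edges that share an endpoint, or are disjoint, are linked by the third edge $vw$ or $uw$ inside the clique. Your explicit restriction to nice tree decompositions refined from a clique tree, with forgets placed before introduces between adjacent maximal cliques so that every intermediate bag stays inside one maximal clique, is a genuine improvement: the paper simply asserts that every bag of an arbitrary tree decomposition of a chordal graph induces a clique, which is false, exactly as your path example shows.
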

\begin{proof}
	Since $G$ is chordal, every bag $B$ of the tree decomposition induces a clique.
	Suppose, for contradiction, that there exist two distinct edges
	$e_1 = uv$ and $e_2 = xy$ in $G[B]$ that are both selected in an edge open packing.
	
	If $e_1$ and $e_2$ are vertex-disjoint, then because $B$ is a clique, the edge $ux$
	belongs to $G[B]$. This edge is incident to an endpoint of $e_1$ and an endpoint of
	$e_2$, implying that $e_1$ and $e_2$ have a common edge, which contradicts the
	definition of an edge open packing.
	
	If $e_1$ and $e_2$ share exactly one endpoint, say $u = x$, then again since $B$ is a
	clique. Then  
	$vy\in E(G[B])$ 
	and is incident to an endpoint of each of  
	$e_1$ and $e_2$, a contradiction.
	
	Thus, no two distinct edges of $G[B]$ can both be selected in a feasible edge open
	packing. Hence, at most one edge of $G[B]$ can belong to a solution.
\end{proof}

\noindent
\textbf{Overview of the algorithm.}
We design a dynamic programming algorithm over a nice tree decomposition
$(T,\mathcal{B})$ of the input chordal graph $G$.
The algorithm processes the decomposition bottom-up, and for each node $t$,
maintains information about how a partial edge open packing intersects the bag
$B_t$ and the subgraph induced by the subtree rooted at $t$.
The key observation is that, due to the structure of chordal graphs, at most one
edge can be selected inside any bag, due to Observation \ref{obs:bag-structure}, which allows us to describe all feasible
partial solutions using a bounded amount of information per bag.
By carefully propagating this information through the introduce, forget, and
join nodes of the decomposition, we compute a maximum edge open packing in
fixed-parameter tractable time with respect to the clique number (equivalently,
the treewidth).

\medskip
\noindent
\textbf{Dynamic Programming States.}
Let $(T,\mathcal{B})$ be a nice tree decomposition of $G$. For a node $t \in V(T)$, let $B_t$ be its bag and let $T_t$ denote the subgraph of $G$ induced by all vertices appearing in the subtree rooted at $t$. We define a dynamic programming table $\DP[t,g,h,p]$, where the parameters are:

\begin{itemize}
    \item $g: B_t \to \{0,1\}$ records adjacency to solution edges outside the bag. 
    For $v \in B_t$, $g(v)=1$ if there exists $w \in V(T_t)\setminus B_t$ such that $vw$ is selected in the partial solution; otherwise $g(v)=0$.

    \item The set $h: B_t \to \{\emptyset, \text{center}, \text{leaf}\}$ encodes the type of each vertex in the bag:
at most one vertex can be a center, and at most one vertex can be a leaf. 

    \item $p \in \mathbb{N}$ is the number of edges in the partial solution in $T_t$.
\end{itemize}

An entry $\DP[t,g,h,p]$ is set to \texttt{true} if there exists a partial edge open packing of size $p$ in $T_t$ consistent with $g$ and $h$.



\medskip
\noindent\textbf{Leaf Node.}
If $t$ is a leaf node, then $B_t=\emptyset$ and we set 
$\DP[t,\emptyset,\emptyset,0] := \text{true}$. 
All other entries are false.

\medskip
\noindent\textbf{Introduce Node.}
Let $t$ be an introduce node with child $t'$ and
$B_t = B_{t'} \cup \{v\}$.
For each entry $\DP[t',g',h',p]=\text{true}$, we consider the following cases.

\begin{itemize}
    \item \textbf{Vertex $v$ is not incident to any selected edge.} We consider
    $g(v) = 0, h(v)= \emptyset, \text{ and for all } u \in B_{t'}, h(u) = h'(u), g(u) = g'(u)$. 
    Then set    $\DP[t, g, h, p] := \text{true}$.

\item \textbf{Vertex $v$ is incident to a selected edge $vw$ with $w \in B_{t'}$. } This is allowed only if one of the following holds:
\begin{enumerate}
    \item $h'(w) = \emptyset$ and $g'(w) = 0$ (so adding $vw$ does not create an induced $P_4$).

For this case, we consider 
$h(v) = \text{center}$ (resp. $h(v) = \text{leaf}$), if $h(w) = \text{leaf}$ (resp. $h(w) = \text{center}$), $g(u) = 1 \ \forall u \in B_t$.

    \item $h'(w) = \text{center}$. 

We consider $h(v)=\text{leaf}$, and $g(u) = 1 \ \forall u \in B_t$. 

\end{enumerate}

In both the cases, we set
$\DP[t, g, h, p+1] := \text{true}$. 
Notice that if $h'(w) = \text{leaf}$, this indicates that $w$ is already an endpoint of a solution edge $ww'$ in $T_{t'}$, where $w'$ is the center. Since $w' \notin B_t$ and $w$ is a leaf, $w$ cannot be incident to any additional edge in future. Therefore, this case can be safely ignored.


\end{itemize}

\noindent\textbf{Forget Node.}
Let $t$ be a forget node with child $t'$ and
$B_t = B_{t'} \setminus \{v\}$. For each entry $\DP[t', g', h', p'] = \text{true}$, we consider the following: 
   for all $u \in B_t$, consider $
    h(u) = h'(u), \quad g(u) = g'(u), \quad p = p'$, and update 
    $\DP[t, g, h, p] := \text{true}$.


    



    

    

\medskip
\noindent\textbf{Join Node.}
Let $t$ be a join node with children $t_1$ and $t_2$
such that
\[
B_t = B_{t_1} = B_{t_2}.
\]
For every pair of valid entries
\[
\DP[t_1,g_1,h_1,p_1]=\text{true}
\quad\text{and}\quad
\DP[t_2,g_2,h_2,p_2]=\text{true},
\]
we combine the corresponding partial solutions at node~$t$,

we allow a transition only if the following consistency conditions hold for all $v \in B_t$:

\begin{itemize}
    \item If $h_1(v) = \text{center}$, then $h_2(v) \in \{\text{center}, \emptyset\}$.
    \item If $h_2(v) = \text{center}$, then $h_1(v) \in \{\text{center}, \emptyset\}$.
    \item If $h_1(v) = \text{leaf}$, then $h_2(v) \neq \text{center}$.
    \item If $h_2(v) = \text{leaf}$, then $h_1(v) \neq \text{center}$.
\end{itemize}




Moreover, combining the two partial solutions must not create an induced $P_4$ or a $C_3$. We define the new parameters as follows:
\begin{itemize}
    \item For each $v \in B_t$, we set
     $g(v) = \max\{g_1(v), g_2(v)\}$, and 
     
    \[h(v) =
    \begin{cases}
        \text{center}, & \text{if $h_1(v) = \text{center}$ or $h_2(v) = \text{center}$},\\
        \text{leaf}, & \text{if $h_1(v) = \text{leaf}$ or $h_2(v) = \text{leaf}$},\\
        \emptyset, & \text{otherwise},
    \end{cases}
    \]
    

    \item The number of edges is updated as
    
    \[p = p_1 + p_2 - 
    \begin{cases}
        1, & \text{if $|\{v \in B_t : h_1(v) \neq \emptyset \text{ and } h_2(v) \neq \emptyset\}| = 2$},\\
        0, & \text{otherwise}.
    \end{cases}
    \]
    
    This accounts for a solution edge whose both endpoints are in $B_t$, to avoid double counting.
\end{itemize}

Finally, we set 
$\DP[t, g, h, p] := \text{true}$.


\medskip
\noindent\textbf{Answer.}
Let $r$ be the root of the tree decomposition $T$. 
The size of a maximum edge open packing in $G$ is 
$\max \bigl\{\, p \;\big|\; \exists g, h : \DP[r, g, h, p] = \text{true} \,\bigr\}$. 
Here, $h$ encodes for each vertex in the root bag whether it participates in a solution edge (as a center or leaf) or is empty, and $g$ tracks adjacency to solution edges outside the bag.

The correctness of the algorithm follows directly from the description of the dynamic programming formulation. 
At each node of the nice tree decomposition, the DP table correctly encodes all feasible partial edge open packings in the subtree rooted at that node, with the functions $g$ and $h$ ensuring that no induced $P_4$ or $C_3$ is created and that edge endpoints are correctly tracked. 
By induction on the structure of the tree decomposition, the DP entries at the root correspond exactly to all feasible edge open packings in $G$. 

\medskip
\noindent\textbf{Running Time.} For a chordal graph $G$ it is known that the treewidth of $G$ equals the clique number of $G$, denoted by $\omega(G)$ or simply $\omega$. 
Let $\omega$ be the width of the tree decomposition, so $|B_t| \le \omega+1$ for all bags $B_t$. 
For each bag $B_t$, the DP table has dimensions:
$g: B_t \to \{0,1\}, h: B_t \to \{\text{center}, \text{leaf}, \emptyset\}, p \in [0, |E(G)|]$.  Thus, the number of DP entries per bag is at most 
$2^{|B_t|} \cdot 3^{|B_t|} \cdot (|E(G)|+1) = O(6^{\omega+1} \cdot |E(G)|)$. For each node, computing the DP table requires combining entries from at most two children (in the case of join nodes), and for introduce/forget nodes the update can be done in $O(6^{|B_t|})$ time per entry. 
Since the number of nodes in a nice tree decomposition is $O(n)$, the overall running time is 
$O(n \cdot 6^{2(\omega+1)} \cdot |E(G)|) = 2^{O(\omega)} \cdot \mathrm{poly}(n)$, 
i.e., the algorithm runs in FPT time parameterized by the clique number of the input chordal graph. This completes the proof of Theorem \ref{thm:chordal}.






\section{Conclusion}\label{EOP_Sec_5}
In this paper, we have designed polynomial-time algorithms to solve \maxeop~in distance-hereditary and biconvex bipartite graphs. We consider the decision version of the problem and show that the problem is FPT when parameterized by the clique number on chordal graphs. However, the complexity of the problem on chordal graphs in general remains unsettled and is an interesting direction to study.

\bibliographystyle{plain}
\bibliography{EOP_Bib}

\end{document}